
\documentclass[11pt]{amsart}
\IfFileExists{srcltx.sty}{\usepackage[active]{srcltx}}{}

\usepackage{amscd}
\usepackage{amsfonts,amssymb,latexsym}
\setlength{\oddsidemargin}{0.4in}
\setlength{\evensidemargin}{0.4in}
\setlength{\textwidth}{5.5in}
\setlength{\textheight}{8.8in}
\setlength{\marginparwidth}{0.8in}
\addtolength{\headheight}{2.5pt}





\newcommand{\SC}{{\mathcal{C}}}

\newcommand{\SE}{{\mathcal{E}}}
\newcommand{\SF}{{\mathcal{F}}}

\newcommand{\SM}{{\mathcal{M}}}

\newcommand{\SO}{{\mathcal{O}}}

\newcommand{\ox}{\otimes}

\newcommand{\PP}{\mathbb{P}}
\newcommand{\ZZ}{\mathbb{Z}}

\newcommand{\CC}{\mathbb{C}}

\newcommand{\QQ}{\mathbb{Q}}
\newcommand{\VV}{\mathbb{V}}

\newcommand{\Ext}{\operatorname{Ext}}

\newcommand{\Spec}{\operatorname{Spec}}

\newcommand{\codim}{\operatorname{codim}}

\newcommand{\Hom}{\operatorname{Hom}}

\newcommand{\Pic}{\operatorname{Pic}}

\newcommand{\surj}{\twoheadrightarrow}
\newcommand{\inj}{\hookrightarrow}
\newcommand{\too}{\longrightarrow}
\newcommand{\rk}{\operatorname{rk}}
\newcommand{\End}{\operatorname{End}}
\newcommand{\wt}{\widetilde}
\newcommand{\tr}{\operatorname{tr}}
\newcommand{\GL}{\operatorname{GL}}

\newcommand{\PGL}{\operatorname{PGL}}

\newtheorem{proposition}{Proposition}[section]
\newtheorem{theorem}[proposition]{Theorem}

\newtheorem{lemma}[proposition]{Lemma}

\newtheorem{remark}[proposition]{Remark}

\numberwithin{equation}{section}

\begin{document}

\title[Automorphisms of moduli spaces]{Automorphisms of moduli
spaces of vector bundles over a curve}

\author[I. Biswas]{Indranil Biswas}
 \address{School of Mathematics, Tata Institute of Fundamental
 Research, Homi Bhabha Road, Bombay 400005, India}
\email{indranil@math.tifr.res.in}

\author[T. L. G\'omez]{Tom\'as L. G\'omez}
\address{Instituto de Ciencias Matem\'aticas (CSIC-UAM-UC3M-UCM),
Serrano 113bis, 28006 Madrid, Spain; and
Facultad de Ciencias Matem\'aticas,
Universidad Complutense de Madrid, 28040 Madrid, Spain. }
\email{tomas.gomez@icmat.es}

\author[V. Mu\~{n}oz]{Vicente Mu\~{n}oz}
\address{Facultad de Ciencias Matem\'aticas,
 Universidad Complutense de Madrid, 28040 Madrid, Spain}
\email{vicente.munoz@mat.ucm.es}

\subjclass[2000]{14H60}

\keywords{Stable bundles, moduli space, automorphism group}

\date{}

\thanks{Partially supported by grant MTM2007-63582 of
the Ministerio de Ciencia e Innovaci\'on (Spain).}

\begin{abstract}
Let $X$ be an irreducible smooth complex projective curve
of genus $g\geq 4$.
Let $M(r,\Lambda)$ be the moduli space of stable vector bundles $E\too
X$ or rank $r$ and fixed determinant $\Lambda$.
We show that the automorphism group of $M(r,\Lambda)$ is generated by
automorphisms of the curve $X$, tensorization with suitable line bundles,
and, if $r$ divides $2\text{deg}(\Lambda)$, also dualization of
vector bundles.
\end{abstract}

\maketitle

\section{Introduction}

Let $X$ be a smooth complex projective curve of genus $g$,
with $g\, \geq \, 4$. Fix a line bundle $\Lambda$ on $X$ of degree $d$.
Let $M(r,\Lambda)$ be the moduli space of stable vector bundles $E\too X$
or rank $r$ and determinant $\det (E) \cong \Lambda$.

There are two obvious ways of producing automorphisms of $M(r,\Lambda)$ :
\begin{enumerate}
\item Let $\sigma:X\too X$ be an automorphism, and let $L$ be a
line bundle with $L^r\otimes \sigma^*\Lambda\cong \Lambda$.
Send $E$ to $L\otimes \sigma^* E$.
\item Let $\sigma:X\too X$ be an automorphism, and let $L$ be a
line bundle with $ L^r\otimes\sigma^*\Lambda^{-1}\cong \Lambda$.
Send $E$ to $L\otimes \sigma^* (E^\vee)$.
\end{enumerate}
Note that the second type can only occur if $r|2d$.
Also note that both operations send stable bundles to stable
bundles, so they do define automorphisms of the moduli space
$M(r,\Lambda)$.

We aim is to give a new proof of the following result of
\cite{KP} and \cite{HR}.

\begin{theorem} \label{main}
The automorphism group of $M=M(r,\Lambda)$
is given by the automorphisms described above.
\end{theorem}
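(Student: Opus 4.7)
The plan is to exploit the rigidity coming from $\Pic(M)\cong\ZZ\langle\Theta\rangle$ (the generalized theta line bundle), together with the fact that the minimal rational curves on $(M,\Theta)$ are the \emph{Hecke curves} of Narasimhan--Ramanan. Through a generic $E\in M$ these form a projective bundle $\pi_E\colon\PP(E)\too X$, and the corresponding variety of minimal rational tangents at $E$ is realized by a canonical embedding $\PP(E)\inj\PP(T_E M)$ (Hwang--Ramanan, Sun). Any automorphism $\phi\colon M\too M$ satisfies $\phi^*\Theta\cong\Theta$ by ampleness, so $\phi$ sends minimal rational curves to minimal rational curves and hence Hecke families to Hecke families.

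\emph{Step 1: Reconstructing an automorphism of $X$.} Each Hecke curve $C\subset M$ is supported at a canonically associated point $x_C\in X$ (the point where the Hecke modification occurs), and $x_C$ can be recovered intrinsically from $C$ as a subvariety of $M$ (e.g.\ via the one-parameter family of Hecke modifications parametrising $C$). Thus $\phi$ acts on the set of Hecke curves and therefore on $X$, yielding an automorphism $\sigma_\phi\in\op{Aut}(X)$. Composing $\phi$ with the inverse of the automorphism of $M$ described in (1) for $\sigma_\phi$ and an appropriate determinant-correcting line bundle, one reduces to the case $\sigma_\phi=\id_X$.

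\emph{Step 2: Identifying $\phi$ up to twist or dualization.} With $\sigma_\phi=\id_X$, for generic $E\in M$ the map $\phi$ induces an isomorphism of $\PP^{r-1}$-bundles $\PP(E)\cong\PP(\phi(E))$ over $X$. This forces $\phi(E)\cong L(E)\ox E$ or, when $r\mid 2d$, $\phi(E)\cong L(E)\ox E^\vee$, for some line bundle $L(E)$ on $X$. The determinant condition $\det\phi(E)\cong\Lambda$ restricts $L(E)$ to a finite set of choices; by continuity in $E$ and connectedness of $M$, $L(E)$ is a constant $L\in\Pic(X)$, and the two alternatives cannot be mixed on $M$. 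This exhibits $\phi$ as one of the automorphisms listed in (1) or (2), completing the argument.

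\emph{Main obstacle.} The central technical input is the intrinsic recovery of $X$ in Step 1: one must show that Hecke curves are \emph{all} the rational curves on $M$ of minimal $\Theta$-degree, and that the induced VMRT $\PP(E)\inj\PP(T_E M)$ is rigid enough that $\phi$ must respect the underlying curve $X$. This rests on the Hwang--Ramanan/Sun analysis of the tangent map to the Hecke family and a classification of low-degree rational curves in $M$; the hypothesis $g\geq 4$ enters here to ensure uniqueness of the minimal family and stability of the Hecke modifications.
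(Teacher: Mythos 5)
Your proposal follows a genuinely different route from the paper: it is, in outline, the Hwang--Ramanan strategy via Hecke curves and minimal rational tangents, which is precisely the machinery this paper was written to avoid. The paper instead works on $T^*M$: it recovers the Hitchin map intrinsically as $\Spec(\Gamma(T^*M))\too W$, characterizes the Hitchin discriminant $h^{-1}(\mathcal D)$ as the closure of the union of complete rational curves in $T^*M$, extracts $\op{Aut}(X)$ from the dual-variety description of $\mathcal D\cap H^0(K_X^r)$, shows the induced map on $W$ is the identity by comparing Prym varieties of spectral curves as polarized Hodge structures, and then reconstructs from the fibers of $h$ the bundles $\End_0E\ox K_X$, their nilpotent cone subbundles, and (via Gerstenhaber's theorem) the flag bundles $Fl(E)$, concluding with the elementary Lemma on Lie algebra bundle isomorphisms $\End_0E\cong\End_0E'$. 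What the paper's route buys is that every rational curve it needs lives in a fiber of a map to an affine space, so ``complete curve maps to a point'' does all the work, and no classification of minimal rational curves on $M$ itself is required; your route instead requires knowing that the Hecke curves exhaust the minimal rational curves on $M$ and that the VMRT is nondegenerate enough to pin down $X$ and $\PP(E)$.

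That last point is where your write-up has a genuine gap rather than just a different emphasis: the item you label the ``main obstacle'' --- that Hecke curves are \emph{all} rational curves of minimal $\Theta$-degree, that $x_C$ is intrinsically recoverable, and that the tangent map to the Hecke family is an embedding --- is not a technical footnote but the entire content of the Hwang--Ramanan proof, and you assert it by citation rather than proving it. A second, smaller inaccuracy sits in Step 2: an isomorphism of projective bundles $\PP(E)\cong\PP(E')$ over $X$ forces only $E'\cong E\ox L$; the dualization alternative does not come from such an isomorphism but from the fact that the family of Hecke curves through $E$ is not canonically $\PP(E)$ --- it only determines the pair $\{\PP(E),\PP(E^\vee)\}$ (the VMRT involves both $E$ and $E^\vee$), so an automorphism may interchange the two. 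As stated, your Step 2 would erroneously let the dual case fall out of a projective-bundle isomorphism where it cannot. In the paper this ambiguity is handled cleanly by Lemma \ref{endiso}: a Lie algebra bundle $\End_0E$ determines $E$ exactly up to tensoring by a line bundle \emph{and} dualization, because $\op{Out}(\mathfrak{sl}_r)=\ZZ/2\ZZ$.
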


Theorem \ref{main} was initially proved by Kouvidakis and Pantev \cite{KP}.
The proof of \cite{KP} uses the Hitchin map defined on the moduli of Higgs bundles,
giving a delicate argument in which one has to use the geometry of the fibers of the
Hitchin map over spectral curves of three types: smooth curves, singular curves
with a single node, and some singular curves with
particular type of singularities.

In \cite{HR}, Hwang and Ramanan gave different proof of Theorem \ref{main}.
The proof of \cite{HR} is simpler in spirit (although 
it is a bit technical for the case $g=4$). First, they determine
geometrically the Hitchin discriminant (the locus of singular
spectral curves), and then they go to the dual variety of this Hitchin
discriminant, which they prove to be isomorphic to a
locus of Hecke transforms. For this, one needs to use the theory of minimal rational
curves in the moduli space.

We show that the proof of \cite{HR} can be
largely simplified. We prove Theorem \ref{main} by reconstructing, from
the Hitchin discriminant, the nilpotent cone bundles, which determine the
automorphisms of a generic bundle directly. This avoids passing through
the
dual variety; it also avoids using both the Hecke transforms, and the
minimal rational curves. Moreover, we cover all cases $g \geq 4$ with 
no special arguments for low genus.

It is worth pointing out that the argument provided here can be generalized
to other moduli spaces, like the moduli space of symplectic bundles
\cite{us}.

As a byproduct of our analysis, we also obtain the following
Torelli type theorem for the moduli space $M(r,\Lambda)$. This is
already well-known \cite{MN, NR, NR2, KP, HR}.

\begin{theorem} \label{torelli}
 If $M_X(r,\Lambda) \cong M_{X'}(r',\Lambda')$ then $X\cong X'$ and $r=r'$.
\end{theorem}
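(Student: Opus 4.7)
Suppose $\varphi : M_X(r,\Lambda) \to M_{X'}(r',\Lambda')$ is an isomorphism of varieties. The plan is to deduce Theorem \ref{torelli} as a byproduct of the intrinsic reconstructions carried out in the proof of Theorem \ref{main}: since the entire Hitchin picture (discriminant, nilpotent cones, and ultimately the Hitchin map itself) is extracted from $M$ in a manner depending only on the abstract variety structure of $M$, an isomorphism of moduli spaces must preserve all of this data.

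Concretely, the cotangent bundle $T^*M$ is canonical, so $\varphi$ lifts to a symplectic isomorphism $d\varphi : T^*M_X \to T^*M_{X'}$. The proof of Theorem \ref{main} identifies the Hitchin discriminant $\Delta\subset M$ and the fibres of the nilpotent cone in $T^*M$ purely in terms of the variety $M$; consequently $d\varphi$ intertwines the two Hitchin maps $h : T^*M_X \to \SB_X$ and $h' : T^*M_{X'} \to \SB_{X'}$ up to a linear isomorphism of the Hitchin bases.

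It remains to reconstruct $X$ and $r$ from the Hitchin fibration. For generic $s\in \SB_X$ the fibre $h^{-1}(s)$ is an abelian variety which, via the Beauville--Narasimhan--Ramanan correspondence, is identified with a compactified Jacobian of a smooth degree-$r$ spectral cover $\wt{X}_s \too X$ sitting inside the total space of $K_X$. From the intrinsic fibre together with its relation to $M$ one reads off the cover $\wt{X}_s \to X$, and hence the rank $r$ (as the degree of the cover) and the base curve $X$. Thus $X \cong X'$; the dimension formula $(r^2-1)(g-1) = ((r')^2-1)(g'-1)$ then forces $r=r'$.

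The main obstacle is the second paragraph: one must check that the intrinsic recovery of the Hitchin discriminant, established in the course of proving Theorem \ref{main}, actually propagates to an intrinsic recovery of the full Hitchin map $h$ (and not merely of the discriminant locus or the nilpotent cones). Once that upgrade is verified, the identification of $\wt{X}_s\to X$ from its Jacobian fibration is standard, and no further special arguments are needed for low genus because the reconstruction in Theorem \ref{main} already covers the entire range $g\geq 4$ uniformly.
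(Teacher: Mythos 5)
Your overall strategy---lift $\varphi$ to $T^*M$, recover the Hitchin map intrinsically, and then extract $X$ and $r$ from the fibration---matches the paper's in its first half: the paper's Lemma \ref{lem:4.1} recovers the Hitchin map as $T^*M \too \Spec(\Gamma(T^*M))$, and the $\CC^*$-equivariance plus the characterization of $h^{-1}(\SD)$ as the closure of the union of complete rational curves (Theorem \ref{thm:Hitchin-discriminant}) makes the discriminant intrinsic. Note that the obstacle you flag as ``the main obstacle'' (upgrading from the discriminant to the full Hitchin map) is not actually where the difficulty lies: the full map is recovered directly from global functions, independently of the discriminant.

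The genuine gap is in your third paragraph. For the fixed-determinant moduli space the generic Hitchin fibre is not the Jacobian of the spectral curve but (a torsor under) the Prym variety of the covering $X_s\too X$, as the paper itself notes in the proof of Theorem \ref{thm:autom}. Your claim that ``from the intrinsic fibre together with its relation to $M$ one reads off the cover $\wt X_s\to X$'' is precisely the hard step, and it is not standard: recovering a spectral curve from its Prym is a Prym--Torelli problem (which fails in general), and even granting it you would still need to recover the degree-$r$ projection to $X$, i.e.\ the covering involution data, from the abstract abelian variety. The paper avoids this entirely by a different mechanism: the $\CC^*$-action singles out the top graded piece $W_r=H^0(K_X^r)$ of the Hitchin base, $\SC=\SD\cap W_r$ is computed explicitly as $\bigcup_{x\in X}H^0(K_X^r(-2x))$, and Proposition \ref{prop:4.2} identifies $\PP(\SC)$ with the projective dual of the $r$-canonically embedded curve $X\subset\PP(W_r^*)$; biduality then returns $X$, and $r=r'$ follows from comparing dimensions. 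If you want to salvage your route through the fibres, you would need to supply the Prym--Torelli input (which is what the paper's proof of Theorem \ref{thm:autom} does via the polarized Hodge structure decomposition $H^1(X_s)\cong H^1(X)\oplus H^1(P_s)$---but there the base curve is already known to be the same on both sides, so that argument does not transfer to the Torelli setting).
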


\section{Moduli space of bundles}

Let $X$ be a smooth complex projective curve of genus $g$,
with $g\, \geq \, 3$. Fix $r\geq 2$, and also fix a line bundle
$\Lambda$ on $X$ of degree $d$.
A vector bundle $E\too X$ is called \textit{stable}
(respectively, \textit{semistable}) if, for
all proper subbundles $E'\subset E$ of positive rank,
 $$
 \frac{\deg E'}{\rk E'} < \frac{\deg E}{\rk E} \quad
 \text{(respectively, $\frac{\deg E'}{\rk E'}\leq \frac{\deg E}{\rk
E}$)}\, .
 $$

Let $M(r,\Lambda)$ be the moduli space of stable bundles $E\too X$
or rank $r$ and determinant $\det (E) \cong \Lambda$.

The following results are needed later. They appear in  \cite{HR} but we
give a simpler proof.

\begin{lemma} \label{lem:asdfg1}
Assume that $d \,\leq\, r(g-1)$ and $r\geq 2$.

Then $H^0(E)\, =\, 0$ for a general stable bundle $E\,\in\,
M(r,\Lambda)$.

Moreover, if ${\mathcal F}  \subset \text{Jac}^0 X$ is a $\kappa$-dimensional family
of line bundles of degree zero and $d\, \leq \, r(g-1)-\kappa$,
then for generic $E$,
$$H^0(E\otimes L)\, =\, 0$$ for all $L\in  \SF$.
\end{lemma}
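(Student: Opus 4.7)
\medskip

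\noindent\textbf{Proof plan.} The first assertion is the special case $\kappa=0$, $\SF=\{\SO_X\}$ of the second, so it suffices to prove the general statement. Let
$$W' \;=\; \{E \in M(r,\Lambda)\,:\, H^0(E \otimes L)\neq 0 \text{ for some } L\in\SF\}.$$
I will show $\dim W' < \dim M(r,\Lambda) = (r^2 - 1)(g - 1)$; the ``generic $E$'' conclusion follows.

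A nonzero section of $E\otimes L$ is the same as a nonzero homomorphism $L^{-1}\too E$, and its saturation is a line subbundle $L^{-1}(D) \hookrightarrow E$ where $D$ is an effective divisor of some degree $\ell\geq 0$. Stability of $E$ forces $\ell < d/r$, so only finitely many $\ell$ occur and $W' = \bigcup_\ell W'_\ell$. It suffices to bound each $\dim W'_\ell$.

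Fix such an $\ell$ and parameterize $W'_\ell$ by quadruples $(L,D,F,\xi)$, where $L\in\SF$, $D\in\Sym^\ell X$, $F$ is a rank $r-1$ torsion-free sheaf with $\det F = \Lambda\otimes L(-D)$, and $\xi\in\Ext^1(F,L^{-1}(D))$ is the class of an extension $0 \too L^{-1}(D) \too E \too F \too 0$ with middle term $E$. The dimension contributions of the four factors are, respectively, $\kappa$, $\ell$, $((r-1)^2-1)(g-1)$, and (at a generic $F$) $h^1(F^\vee\otimes L^{-1}(D)) = d + (r-1)(g-1) - r\ell$. The last is the Euler-characteristic value by Riemann--Roch, attained when $H^0(F^\vee\otimes L^{-1}(D)) = 0$, which holds for semistable $F$ since $F^\vee\otimes L^{-1}(D)$ has negative slope whenever $\ell < d/r$. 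After subtracting $1$ for the $\CC^*$-rescaling of $\xi$ and comparing with $\dim M(r,\Lambda)$, one obtains
$$\dim M(r,\Lambda) - \dim W'_\ell \;\geq\; r(g-1) - d + (r-1)\ell + 1 - \kappa \;\geq\; 1$$
under the hypothesis $d\leq r(g-1)-\kappa$, which is the desired bound.

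The delicate point is that the Ext-dimension estimate must apply throughout the parameter space of $F$ and not merely at a generic point, since $h^1(F^\vee\otimes L^{-1}(D))$ jumps upward on non-semistable or otherwise special $F$. This is handled by a standard semicontinuity argument on the universal Ext sheaf: the jumping loci in the moduli of $F$ have codimension bounded below by the corresponding increase in $h^0$, which precisely compensates the rise in $h^1$ and leaves the total-dimension estimate unchanged.
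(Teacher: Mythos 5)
Your overall strategy is the same as the paper's: stratify the locus of stable $E$ with $H^0(E\otimes L)\neq 0$ by extensions $0\to L^{-1}(D)\to E\to F\to 0$ and count dimensions of the parameter space $(L,D,F,\xi)$; your arithmetic, including the final codimension bound $r(g-1)-d+(r-1)\ell+1-\kappa$, agrees with the paper's. The one genuine gap is in how you justify the key vanishing $H^0(F^\vee\otimes L^{-1}(D))=\Hom(F,L^{-1}(D))=0$, which is what forces $\dim\Ext^1(F,L^{-1}(D))$ to equal the Riemann--Roch value. You derive it from semistability of $F$, but $F$ is an arbitrary quotient of a stable bundle by a line subbundle and need not be semistable. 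Your proposed patch --- that the locus where $h^0(F^\vee\otimes L^{-1}(D))$ jumps by $j$ has codimension at least $j$ in the family of $F$'s --- is not a standard fact and is not something you can assert without proof: Brill--Noether-type jumping loci need not have their expected codimension, and in any case the expected codimension of $\{h^0\geq j\}$ is not $j$; moreover the relevant family here includes arbitrarily unstable $F$, where no such control is available. As written, this step fails.

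The fix is simple and is exactly what the paper does: you do not need any positivity of the codimension of the jumping locus, because the jumping locus contributes nothing at all. If $\Hom(F,L^{-1}(D))\neq 0$, then for \emph{any} extension $0\to L^{-1}(D)\to E\to F\to 0$ the composition $E\twoheadrightarrow F\to L^{-1}(D)\hookrightarrow E$ is a nonzero endomorphism of $E$ of rank at most one, hence not a scalar; since stable bundles are simple, $E$ cannot be stable. Thus every triple $(L,D,F)$ that actually produces a point of $M(r,\Lambda)$ satisfies $\Hom(F,L^{-1}(D))=0$, and the Ext-dimension is the Euler-characteristic value uniformly over the relevant part of the parameter space. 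Replace your semicontinuity paragraph with this observation and the proof is complete and matches the paper's.
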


\begin{proof}
Recall that $\dim M(r,\Lambda) = (r^2-1)(g-1)$.
We want to show that the codimension of the subset
 $$
 \big\{ E \,\in\, M(r,\Lambda)\, \mid\, H^0(E)\, \not=\, 0\big\}
 \, \subset\, M(r,\Lambda)
 $$
is positive. Suppose that $E \,\in\, M(r,\Lambda)$
and  $s \in H^0(E)\setminus \{0\}$. Then there is a
short exact sequence
 \begin{equation}\label{eqn:ext}
 0\too L \stackrel{s}{\too} E \too Q \too 0\, ,
 \end{equation}
where $L\,=\, {\mathcal O}_X(D)$ with
$D$ being the effective divisor defined by $s$. Write $\ell = \deg D\geq
0$. Let us compute the dimension of the space of extensions
as in (\ref{eqn:ext}).

The line bundles $L$ are parametrized by a family of dimension at most
$\ell$. The family of vector bundles $Q$ is bounded, and the
dimension of the parameter space is at most $((r-1)^2-1)(g-1)$.
Finally, $\Hom(Q,L)=0$, since otherwise there would be a non-constant
non-zero endomorphism of $E$ given by the composition
$E\twoheadrightarrow Q \to L \hookrightarrow E$, which is not
allowed by the stability condition of $E$. So
  $$
   \dim \Ext^1(Q,L) = -\chi(L\ox Q^*)= - \deg(L \ox Q^*)+ (r-1)(g-1) = d-r \ell +(r-1)(g-1)\, ,
  $$
since $\deg Q=d-\ell$. So the dimension of the family of $E$ in (\ref{eqn:ext}) is
  \begin{eqnarray*}
  &\leq& \ell + ( (r-1)^2-1)(g-1) + d-r \ell +(r-1)(g-1)-1  \\
  &<& (r^2-1)(g-1) -(r(g-1)-d)\, .
  \end{eqnarray*}
So if $d \leq r(g-1)$, then the family of $E$ as in (\ref{eqn:ext}) is
of
positive codimension.

For the second part of the lemma, just note that the vector bundles $E$
such that
$H^0(E\otimes L')\neq 0$ for some $L'\in \SF$ lie in a short exact
sequence like
(\ref{eqn:ext}) with $L \,=\, L'\otimes {\mathcal O}_X(D)$. The space of
such bundles $L$ is of dimension $\leq \ell + \kappa$. The result
follows analogously as before.
\end{proof}

For a bundle $E$, let $\End_0 E$ denotes the
sheaf of endomorphisms of trace zero. For any divisor
$D$, define $\End_0 E(D)\, :=\, (\End_0 E)\otimes
{\mathcal O}_X(D)$.

\begin{lemma}\label{lem:end-zero}
Suppose that $g\geq 2\ell +2$.
Then $H^0(\End_0 E (D))=0$ for a general stable bundle $E \in
M(r,\Lambda)$ and any effective divisor $D$ of degree $\ell$.
\end{lemma}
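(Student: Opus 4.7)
The plan is to mimic the dimension count of Lemma~\ref{lem:asdfg1}. I will show that the closed subset
$$
Z \,=\, \bigl\{(E,D)\in M(r,\Lambda)\times\Sym^\ell X \,:\, H^0(\End_0 E(D))\neq 0\bigr\}
$$
satisfies $\dim Z<\dim M(r,\Lambda)=(r^2-1)(g-1)$, so that the first projection is not dominant; a general $E\in M(r,\Lambda)$ then works uniformly for every effective $D$ of degree $\ell$.

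Given a point of $Z$ together with a nonzero $\phi\in H^0(\End_0 E(D))$, set $K=\ker\phi$ and $F=E/K$. Since $\im\phi$ embeds into the torsion-free sheaf $E(D)$, the quotient $F$ is torsion-free and $K$ is a saturated subbundle of $E$ of some rank $r_1\in\{0,\ldots,r-1\}$; write $r_2=r-r_1$. First I would handle the main case $r_1\geq 1$: stability of $E$ gives $\deg K<r_1 d/r$, while stability of $E(D)$ applied to the saturation $J\subseteq E(D)$ of $\im\phi$ forces $\deg K>r_1 d/r-r_2\ell$, leaving only finitely many admissible numerical types. Stratifying $Z$ by $(r_1,\deg K)$, I would bound each stratum by summing the dimensions of the bounded families of $K$ and $F$ (at most $r_i^2(g-1)+1$ each); the extension class in $\Ext^1(F,K)$, which has dimension $r_1 d - r\deg K + r_1 r_2(g-1)$ because $\Hom(F,K)=0$ by stability; the $\ell$ parameters for $D$; and the space of injections $\phi'\colon F\hookrightarrow E(D)$. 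The trace-zero condition on $\phi$, read through the sequence $0\to K(D)\to E(D)\to F(D)\to 0$, forces the induced composition $F\to E(D)\to F(D)$ to have vanishing trace, cutting $h^0(\mathcal{O}_X(D))$ dimensions from the space of $\phi'$; subtracting one more for scaling of $\phi$, the inequality $g\geq 2\ell+2$ delivers an absorbing margin of the form $r_1 r_2(g-1-2\ell)\geq r_1 r_2>0$ that makes the total strictly less than $(r^2-1)(g-1)$ on each stratum.

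The remaining case $r_1=0$, in which $\phi$ is injective, is parametrized through the Quot scheme of length-$r\ell$ torsion quotients of $E(D)$; this contributes $r^2\ell$ to the dimension count, again absorbed under $g\geq 2\ell+2$ combined with the trace-zero cut of $h^0(\mathcal{O}_X(D))$ and the scaling of $\phi$. The main obstacle I anticipate is the detailed bookkeeping when $K$ or $F$ fails to be semistable, so that the crude estimate $\dim\{K\}\leq r_1^2(g-1)+1$ is not immediately available; the standard remedy is to refine the stratification by Harder--Narasimhan type and iterate on the HN pieces, again exploiting $g\geq 2\ell+2$ to close the inequality at each level.
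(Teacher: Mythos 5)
Your strategy is genuinely different from the paper's (which argues by induction on $r$, realizing $E$ as an extension $0\to\SO\to E\to F\to 0$ of a generic $F\in M(r-1,\Lambda)$, choosing the extension class outside the $r\ell$-dimensional union $\bigcup_{D}V_D$ so that $H^0(E^*(D))=0$, and then deforming to stable bundles), and a direct dimension count on the incidence variety $Z$ is a reasonable thing to attempt. But as written the count does not close, and the missing piece is not mere bookkeeping. Even granting the most optimistic bounds $\dim\{K\}\le r_1^2(g-1)+1$, $\dim\{F\}\le (r_2^2-1)(g-1)$ (determinant of $F$ fixed once $\det K$ is chosen) and $\dim\PP(\Ext^1(F,K))=r_1d-r\deg K+r_1r_2(g-1)-1$, the locus of $E$ admitting a saturated subsheaf of the numerical type forced by your two stability inequalities has codimension only $\ge r_1r_2(g-1)-rr_2\ell+1$, which is negative already for $r=4$, $r_1=1$, $\ell=1$, $g=4$. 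This is not an artifact of loose estimates: by the Mukai--Sakai/Nagata bound, when $4\nmid d$ \emph{every} stable bundle of rank $4$ on a genus $4$ curve has a line subbundle landing in your window $r_1d/r-r_2\ell<\deg K<r_1d/r$, so the stratification by kernel type cuts out nothing by itself. Hence the whole content of the lemma must be extracted from the existence of the injection $\phi'\colon F\inj E(D)$ lifting the quotient $E\surj F$. Your count, however, only \emph{adds} $\dim\Hom(F,E(D))$ to the parameter space and then loses it again upon projecting to $Z$, since the fibre over $(E,D)$ contains the projectivized space of all such $\phi'$; the term cancels and you are back to the insufficient crude count. You never bound $\dim\Hom(F,E(D))$ --- which is essentially the quantity the lemma is trying to control, so no a priori bound is available on the bad locus --- nor do you explain how the existence of $\phi'$ becomes a positive-codimension condition on the extension class $\xi\in\Ext^1(F,K)$.

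Two further problems of the same nature. For an upper bound on the parameter space you may only assert that the trace condition cuts \emph{at most} $h^0(\SO_X(D))$ dimensions, so subtracting $h^0(\SO_X(D))$ goes the wrong way. And in the case $r_1=0$ your tally is $\dim M+r^2\ell$ minus at most $h^0(\SO_X(D))+1\le\ell+2$, which exceeds $\dim M$ for every $r\ge 2$; the hypothesis $g\ge 2\ell+2$ never enters that computation, and the decisive constraint --- that the rank-$r$ subsheaf of $E(D)$ picked out by the Quot scheme be isomorphic to $E$ itself --- is again left unexploited. Until you show concretely how the existence of $\phi'$ (not merely of the subsheaf $K$) forces a dimension drop of at least $rr_2\ell$ on each stratum, the claimed absorbing margin $r_1r_2(g-1-2\ell)$ is an assertion, not a proof. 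I would encourage you to compare with the paper's argument, where $g\ge 2\ell+2$ enters at exactly one transparent point.
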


\begin{proof}
We will prove this by induction on $r$. The result is obvious for $r=1$,
so assume $r\geq 2$.
Tensoring with a line bundle $\mu$ produces an isomorphism
 $$M(r,\Lambda)\, \stackrel{\sim}{\longrightarrow}\, M(r,\Lambda\otimes \mu^{\otimes r}),
~\,~\, E\,\longmapsto\, E\otimes\mu\, .
 $$
Dualizing produces an isomorphism 
 $$M(r,\Lambda)
\, \stackrel{\sim}{\longrightarrow}\,
M(r,\Lambda^*), ~\,~\, E\,\longmapsto\, E^*\, .
 $$
In view of these isomorphisms, it suffices to prove the lemma for particular
cases. So we can arrange that $0\leq d \leq r/2$.

Using induction, a generic vector bundle $F\in M(r-1,\Lambda)$ satisfies the
condition that $H^0(\End_0 F(D))=0$,
for any $D\in X^{(\ell)}$, where $X^{(\ell)}\,=\, \text{Sym}^\ell(X)$ is the
set of effective divisors on $X$ of degree $\ell$. On the other hand, $H^0(F(D))=0$
and $H^0(F^*(D))=0$, for any $D\in X^{(\ell)}$. 
This is a consequence of Lemma \ref{lem:asdfg1}, because
  $$
  \pm d +(r-1)\ell \leq \frac{r}2 + (r-1)\ell \leq (r-1)(g-1) - \ell \, ,
  $$
which is equivalent to $(r-2)(\ell + \frac12) + (r-1)(g-2\ell-2) \geq 0$.

Now let us construct a rank $r$ vector bundle as follows.
Take an extension 
 \begin{equation} \label{eqn:la-extension}
 0\too \SO \too E \too F\too 0\, .
  \end{equation}
Let $f\in H^0(\End E(D))$, i.e.,
$f:E\too E(D)$. Composing $f$ with
the inclusion $\SO\too E$ and the projection
$E(D)\too F(D)$, we get a section
of $F(D)$, which vanishes by construction. Therefore, $f$ induces
a homomorphism $f_2: F\too F(D)$.
By induction hypothesis, $f_2= s \, \operatorname{Id}$, where $s\in H^0(\SO(D))$.
The map $f':=f-s\, \operatorname{Id} \in H^0(\End E(D))$ has $f_2'=0$, where
$f'_2$ is constructed as above from $f'$.
Consequently, it induces
a map $f':E\too \SO(D)$, i.e., a section of $E^*(D)$.

Let us see that we can choose an extension (\ref{eqn:la-extension}) so that $H^0(E^*(D))=0$.
We have an exact sequence:
 $$
 0=H^0(F^*(D)) \to H^0(E^*(D)) \to H^0(\SO(D)) \to H^1(F^*(D)) \to \ldots
 $$
So we need an extension class $\xi \in H^1(F^*)$ so that the map 
$H^0(\SO(D)) \stackrel{\xi}{\too} H^1(F^*(D))$ is injective for all $D\in X^{(\ell)}$.
That is, $\xi$ does not map to zero under $H^1(F^*) \to H^1(F^*(D))$. Looking
at the exact sequence $0\to V_D:=F^*(D)|_D \to H^1(F^*) \to H^1(F^*(D))\to 0$, we
require $\xi \notin \bigcup_{D\in X^{(\ell)}} V_D$. This space is of dimension $\ell+
(r-1)\ell= r\ell$, and $\dim H^1(F^*)=d+(r-1)(g-1)$. So the condition is
 $$
 r\ell < d+(r-1)(g-1)\, .
 $$
As $d\geq 0$, we need $g\geq \frac{r}{r-1} (\ell+1)$. The minimum happens for $r=2$, and
it is $g\geq 2\ell +2$.

Therefore, $f'\in H^0(E^*(D))=0$ and $f=s \, \mathrm{Id}$. So 
we have constructed a vector bundle $E$ such that $H^0(\End_0 E(D))=0$ for any $D\in X^{(\ell)}$.
Now take a family $E_t$, $t\in T$, of vector bundles parametrized by an (open) curve $T$, such
that $E_0=E$ and $E_t$ is stable for generic $t$ (this is possible because the moduli stack of
vector bundles of fixed rank and determinant is irreducible and the stable locus
is open in it \cite{Hoff}). As the condition that
$H^0(\End_0 E(D))=0$ is open, and $X^{(\ell)}$ is compact, we conclude
that $H^0(\End_0 E_t (D))=0$ for generic $t$. Hence
$H^0(\End_0 \tilde{E} (D))=0$ for generic stable bundle $\widetilde{E}\in M(r,\Lambda)$.
\end{proof}

\begin{remark} \label{rem-v}
{\rm Since $g\, \geq\, 4$, the condition on $g$ in
Lemma \ref{lem:end-zero} is satisfied if $\ell \,=\, 1$.}
\end{remark}

\section{Hitchin discriminant}

Let us recall the definition of the Hitchin map (see \cite[Section
5.10]{Hi}). A Higgs bundle is a pair $(E,\theta)$, where
$\theta:E\too E\otimes K_X$. Denote $M=M(r,\Lambda)$ and
let ${\mathcal M}={\mathcal M}(r,\Lambda)$ be the moduli space
of semistable Higgs bundles of rank $r$ and determinant $\Lambda$.
The cotangent bundle $T^* M \subset {\mathcal M}$ is
an open subset.

The \textit{Hitchin map} 
 $$
 H:{\mathcal M} \too W:= H^0(X, K_X^2) \oplus \ldots \oplus H^0(X, K_X^{r}) \, ,
 $$
is defined by $H(E,\theta)=(s_2(\theta),\ldots, s_{r}(\theta))$, where
$s_i(\theta)\,:=\,{\rm tr}(\wedge^i \theta)$. This
restricts to $$h: T^*M \too W ~\,\text{~and~}\,~
 h_E: T^*_E M= H^0(X, \End_0 E\otimes K_X) \too W\, .
 $$

For an element $s=(s_2,\ldots, s_{r}) \in W$, the \textit{spectral curve} $X_s$ associated to
it is the curve in the total space $\VV(K_X)$ of $K_X$ is defined by the equation
\begin{equation}\label{eqn:spectral-curve}
y^{r} + s_2(x) y^{r-1} + \ldots + s_{r}(x)\,=\,0\, ,
\end{equation}
where $x$ is a coordinate for $X$, and $y$ is the tautological coordinate
$dx$ along the fibers of the projection $\VV(K_X)\too X$.

Consider
 $$
 {\mathcal D}\subset W
 $$
the divisor consisting of characteristic polynomials with singular
spectral curves.

\begin{proposition} \label{prop:W-and-D}
The following statements hold:
\begin{enumerate}
 \item For $w\in W-{\mathcal D}$, the fiber $h^{-1}(w)$ is an open subset of an abelian variety.
 \item For generic $w\in {\mathcal D}$, the fiber $h^{-1}(w)$ is an
  open subset of the uniruled variety. The complement of this open subset is of
codimension at least two.
\end{enumerate}
\end{proposition}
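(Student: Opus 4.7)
The plan is to use the Beauville--Narasimhan--Ramanan (BNR) spectral correspondence throughout. For $s \in W$ with spectral curve $X_s \subset \VV(K_X)$, BNR identifies the total Hitchin fiber $H^{-1}(s) \subset {\mathcal M}$ with the moduli of rank-one torsion-free sheaves $\mathcal{L}$ on $X_s$ satisfying $\det(\pi_*\mathcal{L}) \cong \Lambda$ (up to a normalizing twist by a fixed line bundle on $X_s$), where $\pi\colon X_s \to X$ is the spectral projection; the associated Higgs bundle is $(\pi_*\mathcal{L},\, \pi_*(y\cdot))$. The fiber $h^{-1}(w) \subset T^*M$ is the open subset cut out by the condition that $E = \pi_*\mathcal{L}$ be stable.

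For part (1), when $w \notin \mathcal{D}$ the curve $X_s$ is smooth, so rank-one torsion-free sheaves are line bundles, and the sheaves with prescribed determinant form a torsor over the Prym variety
$$\operatorname{Prym}(X_s/X) = \ker\bigl(\operatorname{Nm}\colon \operatorname{Jac}(X_s) \to \operatorname{Jac}(X)\bigr)^{0},$$
which is an abelian variety. Since $T^*M \subset {\mathcal M}$ is open, $h^{-1}(w) = H^{-1}(w) \cap T^*M$ is an open subset of this abelian variety.

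For part (2), for generic $w \in {\mathcal D}$ the spectral curve $X_s$ has a single ordinary node and is otherwise smooth, so $H^{-1}(w)$ is the compactified Prym. Uniruledness follows from the familiar structure of the compactified Jacobian: the partial normalization $\nu\colon \widetilde{X}_s \to X_s$ at the node induces a surjection $\overline{\operatorname{Pic}}(X_s) \to \operatorname{Pic}(\widetilde{X}_s)$ whose fibers are $\mathbb{P}^1$'s (compactifying the $\mathbb{G}_m$-torsor of gluing data at the node), giving a covering family of rational curves. The complement $H^{-1}(w) \setminus h^{-1}(w)$ consists of semistable Higgs bundles $(E,\theta)$ whose underlying $E$ is not stable. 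I would stratify by the Harder--Narasimhan type of $E$: any destabilizing subbundle $E' \subset E$ cannot be $\theta$-invariant (otherwise $(E,\theta)$ itself would be Higgs-unstable, contradicting its presence in ${\mathcal M}$), so each stratum is constrained by extra non-invariance conditions on the spectral data. A dimension count per HN type then shows each stratum has codimension at least two in the $(r^2-1)(g-1)$-dimensional Hitchin fiber.

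The main obstacle I expect is this last codimension estimate: enumerating the possible HN types of unstable $E$ admitting a Higgs field with the prescribed (generic nodal) spectral polynomial, and verifying that each resulting stratum has codimension strictly greater than one, is the technical heart of the argument and the step most likely to require case analysis.
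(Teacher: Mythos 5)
Your treatment of parts (1) and (2) up to uniruledness matches the paper: part (1) is Hitchin's theorem (the paper simply cites \cite{Hi}), and the uniruledness in part (2) comes from exactly the $\mathbb{P}^1$-fibration structure of the compactified Jacobian of a one-nodal integral curve over the Jacobian of its normalization that you describe (the paper cites \cite{Bh} for this).

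The genuine gap is the codimension-two claim, which you explicitly leave unproven: you propose a Harder--Narasimhan stratification of the non-stable locus inside a fixed nodal Hitchin fiber together with a case-by-case dimension count, and you concede that this ``technical heart'' is not carried out. That estimate is precisely the part of the statement the rest of the paper leans on (it is what lets an isomorphism of $h^{-1}(s)$ extend to the compact fiber $H^{-1}(s)$, and what makes $h^{-1}(\mathcal{D})$ irreducible), so a plan is not a proof here. It is also considerably harder than necessary. The paper argues globally instead: by \cite[Theorem II.6 (iii)]{Fa} the complement $\mathcal{M}-T^*M$ has codimension at least three in $\mathcal{M}$ once $g\geq 3$; since $H^{-1}(\mathcal{D})$ is a divisor and the fibers of $H$ are Lagrangian, hence equidimensional, the intersection $(\mathcal{M}-T^*M)\cap H^{-1}(\mathcal{D})$ has codimension at least two in $H^{-1}(\mathcal{D})$, and therefore for \emph{generic} $w\in\mathcal{D}$ the locus $H^{-1}(w)-h^{-1}(w)$ has codimension at least two in the fiber. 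This uses genericity of $w$ in $\mathcal{D}$ in an essential way and avoids enumerating HN types altogether. To complete your proposal, either import this global codimension bound, or be prepared to actually bound the dimension of the locus of rank-one sheaves $\mathcal{L}$ on the fixed nodal spectral curve with $\pi_*\mathcal{L}$ non-stable --- which is where all the real work in your route would lie.
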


\begin{proof}
The map $H:{\mathcal M} \too W$ is proper. The inverse image
$H^{-1}(w)$ is an abelian variety for $w\in W - {\mathcal
D}$, by \cite{Hi}.

Now take $w\in W-{\mathcal D}$. If $w$ is generic, then $X_w$ has
a unique singularity which is a node.
Let $Y$ be an integral curve whose only singularity is one
simple node at a point $y$. Let $\pi_Y:\wt Y\too Y$ be the
normalization,
and let $x$ and $z$ be the preimages of $y$. The compactified
Jacobian $\overline{J}(Y)$, parametrizing torsionfree sheaves of
rank 1 and degree 0 on $Y$, is birational to a $\PP^1$-fibration
$P$ over $J(\wt Y)$, whose fiber over $L\in J(\wt Y)$
is $\PP(L_x\oplus L_z)$. The morphism $P\too \overline{J}(Y)$
is constructed as follows:

A point of $P$ corresponds to a
line bundle $L$ on $\wt Y$ and a one dimensional quotient
$q:L_x\oplus L_z \surj \CC$ (up to a scalar multiple).
This is sent to the torsionfree sheaf
$L'$ on $Y$ that fits in the short exact sequence
$$
0 \too L' \too (\pi_Y)_*L \stackrel{q}\too \CC_y \too 0\, .
$$
For the proof, see \cite[Theorem 4]{Bh}.

The complement
 $$
 {\mathcal M} - T^*M
 $$
is of codimension at least three (in \cite[Theorem II.6 (iii)]{Fa} it is proved that the complement
has codimension at least two under a weaker assumption, but if
we assume $g\geq 3$, then the same proof gives that the codimension
is at least three). Therefore, $({\mathcal M}- T^*{M})\cap {\mathcal D}$
is of codimension at least $2$
in ${\mathcal D}$,
so for generic $w\in {\mathcal D}$,
$$H^{-1}(w)-h^{-1}(w)\,\subset\, H^{-1}(w)$$ is of codimension at least $2$.
\end{proof}

\begin{proposition} \label{prop:irreducibility}
 The hypersurface $h^{-1}({\mathcal D})$ is irreducible.
\end{proposition}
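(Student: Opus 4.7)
The plan is to show $h^{-1}(\mathcal{D})$ is irreducible in three steps: (a) $\mathcal{D}$ itself is irreducible; (b) a generic fibre of $h$ over $\mathcal{D}$ is irreducible; (c) combine these via a Cartier-divisor dimension count. For (a), I would work with the universal singular locus of the family of spectral curves,
$$
\Sigma := \{(w,P)\in W\times \VV(K_X)\mid p_w(P)=0,\; dp_w(P)=0\}.
$$
Over a generic $P=(x_0,y_0)\in \VV(K_X)$, the three affine-linear conditions $p_w(P) = \partial_y p_w(P) = \partial_x p_w(P) = 0$ on $W$ have linearly independent homogeneous parts: the first two depend only on the values $s_i(x_0)$ while the third depends only on the derivatives $s_i'(x_0)$, and independence within each block is a short check on the coefficient vectors $y_0^{r-i}$ and $(r-i)y_0^{r-i-1}$, using the surjectivity of the jet maps $H^0(K_X^i)\too J^1(K_X^i)_{x_0}$ (valid for $i\geq 2$, $g\geq 2$). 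Hence the fibres of $\Sigma\too \VV(K_X)$ are generically codimension-$3$ affine subspaces, and $\Sigma$ has a unique component $\Sigma_0$ dominating $\VV(K_X)$, which is irreducible. Since a generic $w\in\mathcal{D}$ has a single node (Proposition \ref{prop:W-and-D}), the projection $\Sigma_0\too W$ lies in and dominates $\mathcal{D}$, so $\mathcal{D}$ is irreducible.

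For (b), Proposition \ref{prop:W-and-D}(2) says that for generic $w\in\mathcal{D}$ the fibre $h^{-1}(w)$ is a nonempty open subset (complement of codimension $\geq 2$) of the compactified Jacobian $\overline{J}(X_w)$. Since $X_w$ is integral with a single node, $\overline{J}(X_w)$ is birational to a $\PP^1$-bundle over the smooth Jacobian $J(\widetilde{X}_w)$ and is therefore irreducible, so $h^{-1}(w)$ is irreducible.

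For (c), $h\colon T^*M\too W$ is dominant, so $h^{-1}(\mathcal{D})$ is a Cartier divisor in the irreducible variety $T^*M$, and every irreducible component has codimension exactly one. Since the Hitchin fibration is equidimensional of fibre dimension $(r^2-1)(g-1)$, each such component must dominate $\mathcal{D}$; combined with the irreducibility of the generic fibre established in (b), all components must coincide with the closure of this generic fibre, giving irreducibility. The main obstacle is step (a): the incidence-variety setup is clean, but the key technical point is verifying the independence of the three jet-type conditions on $W$. Step (b) is immediate from the preceding proposition, and step (c) is a routine combination.
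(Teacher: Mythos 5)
Your steps (b) and (c) reproduce the paper's argument in substance: the paper likewise restricts to the open locus $\mathcal{D}^o\subset\mathcal{D}$ of single-node spectral curves, notes that the fibres of $h$ there are irreducible (open subsets, with complement of codimension at least two, of the compactified Jacobian, which is birational to a $\PP^1$-fibration over the Jacobian of the normalization), and then invokes Faltings' theorem that the Hitchin fibres are Lagrangian, hence equidimensional, so that $h^{-1}(\mathcal{D}-\mathcal{D}^o)$ has codimension at least two and cannot contribute a component of the divisor $h^{-1}(\mathcal{D})$. Where you genuinely add something is step (a): the paper's inference from ``fibres over $\mathcal{D}^o$ are irreducible'' to ``$h^{-1}(\mathcal{D}^o)$ is irreducible'' tacitly uses that $\mathcal{D}^o$ itself is irreducible, and your incidence variety $\Sigma\subset W\times\VV(K_X)$ supplies exactly that missing input. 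One refinement: for $r\geq 3$ and $g\geq 3$ your jet computation actually shows the three affine-linear conditions are independent at \emph{every} point of $\VV(K_X)$ (the $s_r$- and $s_{r-1}$-coefficients already separate the first two functionals, and the third lives on the derivative block), so $\Sigma$ is a corank-three affine subbundle and is irreducible outright. This strengthening is worth making, because with only ``generic $P$'' you would still need to exclude components of $\Sigma$ sitting over a curve in $\VV(K_X)$ with fibres of codimension two; such a component has dimension $\dim W-1$ and its image in $W$ could a priori be another component of $\mathcal{D}$, and the single-node statement alone does not rule this out.

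The one place your write-up fails as stated is $r=2$: there $p_w=y^2+s_2(x)$, so $\partial_y p_w=2y$ imposes no condition on $w$ whatsoever, the generic fibre of $\Sigma\too\VV(K_X)$ is empty, and no component of $\Sigma$ dominates $\VV(K_X)$. The singularities are forced onto the zero section, $\Sigma$ becomes the corank-two affine subbundle over $X$ with fibre $H^0(K_X^2(-2x_0))\subset W$, and irreducibility of $\mathcal{D}$ follows from that description instead. This is a repairable case split rather than an obstruction, but as written the claim ``$\Sigma$ has a unique component dominating $\VV(K_X)$'' is false for $r=2$.
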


\begin{proof}
Let ${\mathcal D}^* \subset {\mathcal D}$ be the complement of the subset
consisting of all spectral curves with a single node. Then ${\mathcal D}^*$ is
closed so ${\mathcal D}^o={\mathcal D} -{\mathcal D}^*$ is open.
The fibers of $h$ over points of ${\mathcal D}^o$ are irreducible by Proposition
\ref{prop:W-and-D}. Hence  $h^{-1}({\mathcal D}^o)$ is irreducible.
By Theorem II.5 of \cite{Fa}, the
fibers of the Hitchin map $H:{\mathcal M}\too W$ are Lagrangian
(hence their dimension is half the dimension of
$\mathcal H$). So the fibers of $H$ are equidimensional, and in particular,
the codimension of $h^{-1}({\mathcal D}^*)$ coincides
with that of ${\mathcal D}^*\subset
W$, which is at least two. Therefore $h^{-1}({\mathcal D})$ is an irreducible
divisor of $T^*M$.
\end{proof}

The inverse image $h^{-1}({\mathcal D})$ is called the \textit{Hitchin discriminant}.

\begin{theorem} \label{thm:Hitchin-discriminant}
 The Hitchin discriminant $h^{-1}({\mathcal D})$ is the closure of the union of
 the (complete) rational curves in $T^* M$.
\end{theorem}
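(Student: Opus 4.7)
The plan is to prove the two inclusions separately. For the inclusion $\supseteq$: if $C\subset T^*M$ is the image of a non-constant morphism from $\PP^1$, then the composition with $h$ maps $\PP^1$ into the affine space $W$ and is therefore constant, so $C$ lies in a single fiber $h^{-1}(w)$. If $w\notin{\mathcal D}$ then, by Proposition \ref{prop:W-and-D}(1), this fiber is contained in an abelian variety, which admits no non-constant image of $\PP^1$; hence $w\in{\mathcal D}$. Since $h^{-1}({\mathcal D})$ is closed, the closure of the union of all complete rational curves in $T^*M$ is therefore contained in $h^{-1}({\mathcal D})$.

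For the reverse inclusion I would use that $h^{-1}({\mathcal D})$ is irreducible (Proposition \ref{prop:irreducibility}): it then suffices to exhibit any dense subset of $h^{-1}({\mathcal D})$ that is swept out by complete rational curves. I would fix a generic $w\in{\mathcal D}$, so that the spectral curve $X_w$ has a single node; by Proposition \ref{prop:W-and-D}(2) and its proof, $h^{-1}(w)$ is an open subset of the compactified Jacobian $\overline{J}(X_w)$ whose complement has codimension at least $2$, and $\overline{J}(X_w)$ is the birational image of a $\PP^1$-fibration $P\to J(\wt{X_w})$ on which each $\PP^1$-fiber embeds, since different one-dimensional quotients $q:L_x\oplus L_z\surj\CC$ yield distinct subsheaves $L'$. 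This gives a $(\dim\overline{J}(X_w)-1)$-dimensional family of complete rational curves covering $\overline{J}(X_w)$.

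The main obstacle, and the remaining step, is to verify that a generic member of this family lies entirely inside the open subset $h^{-1}(w)$ rather than only in the ambient $\overline{J}(X_w)$. This is a dimension count: the preimage in $P$ of the codimension-$\geq 2$ subset $\overline{J}(X_w)-h^{-1}(w)$ has dimension at most $\dim\overline{J}(X_w)-2$ (because $P\to\overline{J}(X_w)$ is birational), so its image in $J(\wt{X_w})$ is a proper closed subvariety, and a $\PP^1$-fiber of $P$ over any other point of $J(\wt{X_w})$ descends to a complete rational curve contained in $h^{-1}(w)\subset T^*M$. As $w$ varies over the dense subset of ${\mathcal D}$ consisting of nodal spectral curves, these rational curves sweep out a dense subset of $h^{-1}({\mathcal D})$, and irreducibility then concludes the proof.
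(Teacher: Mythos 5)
Your proof is correct and follows essentially the same route as the paper's: complete rational curves are contracted by $h$ and cannot lie in a fiber over $W-{\mathcal D}$ since those are open in abelian varieties, while the fibers over generic (nodal) points of ${\mathcal D}$ are generically covered by the $\PP^1$-fibers of $P\too J(\wt{X_w})$, and Proposition \ref{prop:irreducibility} finishes the argument; you merely make explicit the dimension count that the paper leaves implicit. The one point to tighten is that your bound on the preimage of $\ol{J}(X_w)-h^{-1}(w)$ in $P$ uses that $P\too \ol{J}(X_w)$ is \emph{finite} (injective over the locally free locus and two-to-one over the boundary), not merely birational --- a birational morphism could a priori have a divisorial preimage over a codimension-two locus --- but finiteness does hold here, so the argument goes through.
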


\begin{proof}
 Let $l\cong \PP^1\subset h^{-1}({\mathcal D})$. Then $h(l)\subset W$. As it is
a complete curve in an affine variety, it is a point. So $l$ is included in a fiber. By Proposition
\ref{prop:W-and-D}, it cannot be contained in a fiber over $w\in W-{\mathcal D}$.

Now let $w\in {\mathcal D}^o$. Then Proposition
\ref{prop:W-and-D} again shows that there is a family of $\PP^1$ generically covering the
fiber over $w$. Now using Proposition \ref{prop:irreducibility}, we get that
the closure is the entire $h^{-1}({\mathcal D})$.
\end{proof}

\section{Torelli theorem}

This section is devoted to a \emph{Torelli type theorem} for the moduli space $M=M(r,\Lambda)$.

\begin{lemma} \label{lem:4.1}
The global algebraic functions $\Gamma(T^*M)$ produce a map
 $$
 \widetilde{h}: T^*M \too \Spec(\Gamma(T^*M))\cong W
 \cong \CC^N
 $$
which is the Hitchin map up to an automorphism of $\CC^N$, where
$N=\dim M$.

Moreover, consider the standard dilation action of $\CC^*$ on the fibers
of $T^*M$. Then
there is a unique $\CC^*$-action ``$\cdot$'' on $W$ such that $\widetilde{h}$
is $\CC^*$-equivariant, meaning $\widetilde{h}(E,\lambda\theta)= \lambda
\cdot \widetilde{h}(E,\theta)$.
\end{lemma}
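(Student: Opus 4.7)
The plan is to identify $\Gamma(T^*M)$ with $\CC[W]$ via pullback along the Hitchin map, and then extract the $\CC^*$-action from the weighted homogeneity of the characteristic coefficients.

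The key step is to establish that $H^* : \CC[W] \to \Gamma(\mathcal{M})$ is an isomorphism. The Hitchin map $H : \mathcal{M} \too W$ is proper (by Hitchin) and surjective, since the compactified Jacobian of every spectral curve $X_s$ is contained in $H^{-1}(s)$. Its generic fibers are connected, being abelian varieties by Hitchin's theorem. Since $\mathcal{M}$ is normal, Stein factorization of $H$ yields a finite map $W' \too W$ which is birational (connected generic fibers) and hence an isomorphism by Zariski's main theorem. Therefore $H_* \mathcal{O}_{\mathcal{M}} = \mathcal{O}_W$, giving $\Gamma(\mathcal{M}) = \CC[W]$. This is the main ingredient — it expresses that $H$ makes $\mathcal{M}$ into an algebraically completely integrable Hamiltonian system with base $W$.

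Next, since $\mathcal{M} - T^*M$ has codimension at least three in $\mathcal{M}$ (as shown in the proof of Proposition \ref{prop:W-and-D} using \cite{Fa}) and $\mathcal{M}$ is normal, algebraic Hartogs gives
$$ \Gamma(T^*M) \,=\, \Gamma(\mathcal{M}) \,=\, \CC[W]. $$
Passing to $\Spec$, the induced morphism $\widetilde{h}$ coincides with $h$ up to the choice of linear coordinates on $W$ — i.e., up to an automorphism of $\CC^N$. The identification $N = \dim W$ follows from Riemann-Roch: $\dim W = (g-1)\sum_{i=2}^r (2i-1) = (r^2-1)(g-1) = \dim M$.

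For the second statement, observe that the dilation $\theta \mapsto \lambda\theta$ sends $s_i(\theta) = \tr(\wedge^i \theta) \in H^0(K_X^i)$ to $\lambda^i\, s_i(\theta)$. Declaring the $\CC^*$-action on $W$ to have weight $i$ on the $H^0(K_X^i)$-summand then makes $\widetilde{h}$ equivariant by construction. Uniqueness is immediate: $\widetilde{h}$ is dominant, the equivariance equation $\widetilde{h}(E,\lambda\theta) = \lambda \cdot \widetilde{h}(E,\theta)$ pins down the action on a dense subset of $W$, and any $\CC^*$-action on the irreducible variety $W$ is determined by its restriction to a dense subset. The only real obstacle in the whole argument is the first step, $\Gamma(\mathcal{M}) = \CC[W]$, which combines properness of the Hitchin map, connectedness of its generic fibers, and normality of $\mathcal{M}$; the rest is bookkeeping.
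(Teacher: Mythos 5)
Your proposal is correct and follows essentially the same route as the paper: establish that the Hitchin map identifies $W$ with $\Spec$ of the global functions on the full Higgs moduli space $\mathcal M$ (the paper simply cites \cite{Hi} for this, where you supply the standard Stein factorization/ZMT argument), and then descend to $T^*M$ via a codimension bound on the complement $\mathcal M - T^*M$. The only cosmetic difference is in the descent step --- you apply algebraic Hartogs on the normal total space $\mathcal M$ using the codimension~$\geq 3$ bound, while the paper works fiberwise using codimension~$\geq 2$ on the general fibers --- and both versions, as well as your weight-$i$ description of the $\CC^*$-action and the density argument for its uniqueness, are sound.
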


\begin{proof}
This holds for the Hitchin map $H$ on the moduli
of semistable Higgs bundles $\SM$
(cf. \cite{Hi}). On the other hand, the general fiber
of $H$ is smooth, and the codimension
of $T^*M \subset \SM$ on these fibers
is at least two
(cf. \cite[Theorem II.6 (i)]{Fa}, and note that $T^*M$
is a subset of the moduli $\SM^0 \subset \SM$ of stable Higgs
bundles). Therefore, e conclude that
the lemma also holds
for the restriction of the Hitchin map to the
cotangent bundle $T^*M$.

The last assertion is clear.
\end{proof}

We note that the $\CC^*$-action on $W$ allows to recover the space
 $$
 W_r \,:=\, H^0( K^{r}_X)\,\subset\, W
 $$
uniquely as the subset where the rate of decay is $|\lambda|^r$, which is the maximum possible.

\begin{proposition}\label{prop:4.2}
The intersection $\SC:={\mathcal D}\cap W_r\subset W_{r}=H^0(K_X^{r}) \subset W$ is irreducible.
Moreover, ${\mathbb P}({\mathcal C})\subset {\mathbb P}(W_{r})$
 is the dual variety of $X\subset {\mathbb P}(W_{r}^*)$ for the embedding
 given by the linear series $|K_X^{r}|$.
\end{proposition}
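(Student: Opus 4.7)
The plan is to identify $\SC$ explicitly as a set of sections of $K_X^r$ and then recognize it as the classical dual variety via the characterization of tangent hyperplanes. First I would observe that for $s_r \in W_r$, the point $(0,\ldots,0,s_r) \in W$ produces the spectral curve in $\VV(K_X)$ cut out by $y^r + s_r(x) = 0$. A direct local computation of partial derivatives shows this curve is singular at $(p,y_0)$ exactly when $r y_0^{r-1} = 0$ (forcing $y_0 = 0$), $s_r(p) = 0$, and $(ds_r)(p) = 0$ in a local trivialization; equivalently, when $s_r$ has a zero of multiplicity at least two at some point of $X$. Thus
$$\SC \,=\, \{\, s_r \in W_r \,:\, s_r \text{ has a double zero on } X\,\}.$$

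Next I would pass to the geometric interpretation. Since $g \geq 3$ and $r \geq 2$, the line bundle $K_X^r$ has degree $r(2g-2) \geq 2g+1$, so $|K_X^r|$ embeds $X$ in $\PP(W_r^*)$. A hyperplane in $\PP(W_r^*)$ corresponds to a class $[s_r] \in \PP(W_r)$, and it cuts $X$ in the zero divisor of $s_r$ viewed as a section of $K_X^r$. By the standard criterion, such a hyperplane is tangent to $X$ at $p$ iff the intersection multiplicity at $p$ is at least two, iff $s_r$ vanishes to order $\geq 2$ at $p$. Hence $\PP(\SC)$ is the closure of the set of tangent hyperplanes, i.e.\ the dual variety of $X \subset \PP(W_r^*)$.

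For irreducibility I would use the incidence correspondence
$$\SI \,=\, \{\,(p, s_r) \in X \times W_r \,:\, s_r \in H^0(K_X^r \otimes \SO_X(-2p))\,\}.$$
The first projection $\SI \to X$ is a vector bundle: its fiber over $p$ has dimension $h^0(K_X^r(-2p))$, and this is independent of $p$ since $h^1(K_X^r(-2p)) = h^0(K_X^{1-r}(2p)) = 0$ for $r \geq 2$ and $g \geq 3$ by Riemann--Roch and degree considerations. Therefore $\SI$ is irreducible, and $\SC$, being the image of $\SI$ under the second projection to $W_r$, is irreducible as well. There is no real obstacle in this plan; the only mild points to check are that the spectral curve is automatically smooth at points with $y \neq 0$ (immediate from $\partial_y(y^r + s_r) = ry^{r-1}$) and the constancy of the fiber dimensions in $\SI$, both of which are routine.
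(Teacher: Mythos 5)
Your proposal is correct and follows essentially the same route as the paper: compute where the spectral curve $y^r+s_r=0$ is singular, identify $\SC$ with the sections of $K_X^r$ having a double zero, and recognize $\PP(\SC)$ as the dual variety via tangent hyperplanes of the very ample embedding $|K_X^r|$. The only difference is that you spell out the irreducibility via the incidence correspondence $\SI\to X$, a point the paper leaves implicit in its description $\SC=\bigcup_{x\in X}\SC_x$; this is a welcome (and correct) elaboration rather than a different method.
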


\begin{proof}
A spectral curve corresponding to a nonzero section
$s_{r}\in W_r=H^0(K_X^{r})$
has equation $y^{r}+s_{r}(x)=0$, and this curve
is singular at the points with coordinates $(x,0)$
such that $x$ is a zero of $s_{r}$ of order at least
two.
We have $s_r\in {\mathcal C}$ if and only if there is some $x_0\in X$
such that $s_r$ vanishes at $x_0$ of order at least two. Therefore
$s_r\in H^0(K_X^{r}(-2x_0)) \subset H^0(K_X^{r})$. From this
the second statement follows, taking into
account that the linear system
$\vert K_X^{r}\vert $ is very ample, so $X$ is embedded.
\end{proof}

Denote
 $$
 {\mathcal C}_x= H^0(K_X^{r}(-2x)) \subset W_{r} \, .
 $$
Then ${\mathcal C}= \bigcup_{x\in X} {\mathcal C}_x$.
There is a unique rational map ${\mathcal C}\too X$ which sends ${\mathcal C}_x$ to
$x$. This is uniquely determined, up to an automorphism of $X$, by the
property that it is the only rational map with connected rational
fibers, up to an automorphism of $X$.

\begin{theorem}[Torelli] \label{thm:torelli}
 Let $X$ and $X'$ be two smooth projective curves of genus $g\geq 3$, and consider
 two moduli spaces of stable vector bundles $M=M_X(r,\Lambda)$ and $M'=M_{X'}(r',\Lambda')$
 on $X$ and $X'$ respectively.
 If these moduli spaces are isomorphic, then $X\cong X'$.
\end{theorem}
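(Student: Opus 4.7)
The plan is to recover the curve $X$ intrinsically from the variety $M$, by combining the structures assembled in Sections 3 and 4. Starting from an isomorphism $\varphi: M \too M'$, I would first lift it to an isomorphism $\Phi: T^*M \too T^*M'$ using functoriality of the cotangent bundle; since $\Phi$ is linear on each fiber, it is automatically $\CC^*$-equivariant for the standard fiberwise dilation action.

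Next, I would appeal to Lemma \ref{lem:4.1} to descend $\Phi$ to an isomorphism $\psi: W \too W'$ fitting in a commutative square with the Hitchin maps $\widetilde{h}$ and $\widetilde{h}'$; the uniqueness clause of Lemma \ref{lem:4.1} forces $\psi$ to intertwine the canonical $\CC^*$-actions. The weight decomposition of $W$ under this action is $\bigoplus_{i=2}^{r} H^0(K_X^i)$, with $H^0(K_X^i)$ of weight $i$, so matching the maximal weights yields $r = r'$ and $\psi(W_r) = W'_{r}$.

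After that, I would exploit the fact that the Hitchin discriminant $\mathcal{D}$ is intrinsic to the pair $(T^*M, \widetilde{h})$: by Proposition \ref{prop:W-and-D} it is precisely the locus over which the fibration $h$ fails to have fibers open in abelian varieties. Hence $\psi(\mathcal{D}) = \mathcal{D}'$, and intersecting with the top weight space gives $\psi(\mathcal{C}) = \mathcal{C}'$. By Proposition \ref{prop:4.2}, $\PP(\mathcal{C}) \subset \PP(W_r)$ is the projective dual of $X$ under the embedding $|K_X^r|$, and similarly $\PP(\mathcal{C}')$ is dual to $X'$. Applying biduality of smooth projective varieties to the induced projective isomorphism $\PP(W_r) \cong \PP(W'_{r})$ will then produce $X \cong X'$; alternatively, one may read off $X$ as the unique target of a rational map from $\mathcal{C}$ with connected rational fibers, as observed after Proposition \ref{prop:4.2}.

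The main obstacle I anticipate is the bookkeeping needed to ensure that $\psi$ intertwines the \emph{canonical} $\CC^*$-actions on $W$ and $W'$, rather than actions differing by an automorphism of $\CC^*$; this should be handled by the uniqueness statement in Lemma \ref{lem:4.1}, combined with the equivariance of $\Phi$ for the standard fiberwise dilation. A related technicality is that $\psi$ is determined by Lemma \ref{lem:4.1} only up to an ambient automorphism of $\CC^N$, but this ambiguity is a change of coordinates that preserves the intrinsically defined subsets $\mathcal{D}$, $W_r$, and $\mathcal{C}$, so it does not obstruct the final identification of the two curves.
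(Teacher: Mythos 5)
Your proposal is correct and follows essentially the same route as the paper's own proof: lift to $d\Phi$ on cotangent bundles, descend to $f\colon W\to W'$ via Lemma \ref{lem:4.1}, use $\CC^*$-equivariance to preserve $W_r$, observe that $\mathcal{D}$ (hence $\mathcal{C}=\mathcal{D}\cap W_r$) is intrinsic, and recover $X$ from $\PP(\mathcal{C})$ by duality via Proposition \ref{prop:4.2}. Your explicit remark that matching maximal weights also gives $r=r'$ is a small refinement the paper leaves implicit in this proof.
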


\begin{proof}
Suppose $\Phi: M\too M'$ is an isomorphism. Then
there is an isomorphism $d\Phi: T^*M \too T^*M'$. By Lemma \ref{lem:4.1}, there
is a commutative diagram
 $$
 \begin{array}{ccc}
 T^*M  & \stackrel{d\Phi}{\too} & T^*M' \\
 \Big\downarrow & & \Big\downarrow \\
 W & \stackrel{f}{\too} & W'
 \end{array}
 $$
for some isomorphism $f:W\too W'$. The ${\mathbb C}^*$-action by
dilations on the fibers of $T^*M$
and $T^*M'$
induce ${\mathbb C}^*$-actions on $W$ and $W'$, and $f$ should be
${\mathbb C}^*$-equivariant (as $d\Phi$ is ${\mathbb C}^*$-equivariant).
Therefore $f: W_{r}\too W_{r}'$, and $f|_{W_r}$ is linear.

We have seen in Proposition \ref{prop:W-and-D}
that the Hitchin discriminant
${\mathcal D}$ is an intrinsically defined subset,
and therefore it is preserved by $f$. So $f$ preserves ${\mathcal C}={\mathcal D}\cap W_{r}$. This induces
an isomorphism of the corresponding dual varieties, and hence by
Proposition \ref{prop:4.2}, an isomorphism $\sigma: X \longrightarrow X'$ is obtained.
\end{proof}

\section{Automorphisms of the moduli space}

In this section we will compute the automorphism group of the moduli
spaces of stable bundles on $X$. We assume that $g\geq 4$.

Recall that there is a decomposition
 $$
  W=\bigoplus_{k=2}^r W_k=\bigoplus_{k=2}^r H^0(K_X^k)\, .
 $$

\begin{proposition} \label{prop:lll}
Fix a generic stable bundle $E\in M=M(r,\Lambda)$, and consider the map
 $$
 h_{r}: H^0(\End_0 E\otimes K_X) \too W_{r} \, ,
 $$
given as composition of the Hitchin map on
$H^0(\End_0 E\otimes K_X)=T^*_EM \subset T^*M$,
followed by the projection $W\longrightarrow W_{r}$.
Then
 $$
 H^0(\End_0 E\otimes K_X(-x_0))
 = \big\{ \psi \in H^0(\End_0 E\otimes K_X) \, | \, h_{r}(\psi+\phi)
 \in {\mathcal H}_{x_0}, \forall \phi\in h_{r}^{-1}({\mathcal H}_{x_0}) \big\}\, ,
 $$
where ${\mathcal H}_x=H^0(K_X^{r}(-x)) \subset W_{r}=H^0(K_X^r)$, for $x\in X$.
\end{proposition}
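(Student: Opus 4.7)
The inclusion ``$\subseteq$'' is immediate. If $\psi\in H^0(\End_0 E\otimes K_X(-x_0))$, then the value $\psi(x_0)$ vanishes, so $(\psi+\phi)(x_0)=\phi(x_0)$ for every $\phi$. Since $h_r(\alpha)(x_0)=\det(\alpha(x_0))$ depends only on the value at $x_0$, the condition $h_r(\phi)\in\mathcal{H}_{x_0}$ transfers to $\psi+\phi$.

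For the reverse inclusion I argue by contrapositive: assuming $A:=\psi(x_0)\neq 0$, I construct some $\phi$ with $h_r(\phi)\in\mathcal{H}_{x_0}$ but $h_r(\psi+\phi)\notin\mathcal{H}_{x_0}$. The argument rests on two inputs.

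\emph{(i) Surjectivity of evaluation.} For generic $E$, the map
$$
\operatorname{ev}_{x_0}\colon H^0(\End_0 E\otimes K_X)\longrightarrow (\End_0 E\otimes K_X)_{x_0}
$$
is surjective. The cokernel embeds in $H^1(\End_0 E\otimes K_X(-x_0))$, which by Serre duality is isomorphic to $H^0(\End_0 E(x_0))^*$; this vanishes by Lemma~\ref{lem:end-zero} with $\ell=1$, as permitted by $g\geq 4$ (cf.\ Remark~\ref{rem-v}).

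\emph{(ii) A linear algebra step.} After trivialising $(\End_0 E\otimes K_X)_{x_0}\cong\mathfrak{sl}_r$, the claim is that for every nonzero $A\in\mathfrak{sl}_r$ there is $B\in\mathfrak{sl}_r$ with $\det B=0$ and $\det(A+B)\neq 0$. If $\det A\neq 0$, take $B=0$. Otherwise pick $v\in\mathbb{C}^r$ with $Av\neq 0$ and choose $C\in\mathfrak{sl}_r$ satisfying $Cv=Av$ and $\det C\neq 0$: the affine space $\{C\in\mathfrak{gl}_r:Cv=Av,\ \tr C=0\}$ has dimension $r^2-r-1\geq 1$ and is not contained in the codimension-one locus $\{\det C=0\}$, as one sees by completing $(v,Av)$ to complementary bases of $\mathbb{C}^r$ to exhibit an invertible such $C$. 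Set $B:=C-A\in\mathfrak{sl}_r$; then $Bv=0$, so $\det B=0$, while $A+B=C$, so $\det(A+B)=\det C\neq 0$.

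Combining (i) and (ii), lift $B$ through $\operatorname{ev}_{x_0}$ to $\phi\in H^0(\End_0 E\otimes K_X)$ with $\phi(x_0)=B$. Then $h_r(\phi)(x_0)=\det B=0$ places $\phi$ in $h_r^{-1}(\mathcal{H}_{x_0})$, while $h_r(\psi+\phi)(x_0)=\det(A+B)\neq 0$ shows $h_r(\psi+\phi)\notin\mathcal{H}_{x_0}$, contradicting the hypothesis on $\psi$.

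\textbf{Main obstacle.} The only genuinely nontrivial input is the surjectivity in (i), where Lemma~\ref{lem:end-zero} is essential; the remaining steps, in particular the construction of $C$ in (ii) and the reduction of the condition on $h_r$ to the fiber-value computation $h_r(\alpha)(x_0)=\det(\alpha(x_0))$, are elementary.
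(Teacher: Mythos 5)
Your proof is correct and follows essentially the same route as the paper's: surjectivity of the evaluation map at $x_0$ via the vanishing $H^1(\End_0 E\otimes K_X(-x_0))\cong H^0(\End_0 E(x_0))^*=0$ from Lemma~\ref{lem:end-zero} (Remark~\ref{rem-v}), the identification $h_r(\phi)\in\mathcal{H}_{x_0}\iff\det(\phi(x_0))=0$, and the linear algebra fact that a traceless $A$ with $\det(A+C)=0$ for all traceless singular $C$ must vanish. The only difference is that you actually spell out a proof of that last fact (correctly, though the existence of an invertible traceless $C$ with $Cv=Av$ deserves a slightly more careful case check than "complete to complementary bases"), whereas the paper simply labels it an easy linear algebra fact.
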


\begin{proof}
First, note that the sequence
 $$
 0\too H^0(\End_0 E\otimes K_X(-x_0)) \too H^0(\End_0 E\otimes K_X)
\too \End_0 E\otimes K_X|_{x_0} \too 0
 $$
is exact, since
$H^1(\End_0 E\otimes K_X(-x_0))=H^0(\End_0 E (x_0))^*= 0$,
for a general bundle (see Remark \ref{rem-v}).
So the map
 $$
 H^0(\End_0 E\otimes K_X) \too \End_0 E\otimes K_X|_{x_0}\, ,
 $$
given by $\phi \longmapsto \phi(x_0)$ is surjective.

Note that $h_{r}(\phi) =\det(\phi) \in W_{r}=H^0(K_X^{r})$. So
 $$
 h_{r}(\phi)\in {\mathcal H}_{x_0} \iff \det(\phi(x_0))=0\, .
 $$
The result now follows from this easy linear algebra fact:
Ff $V$ is a vector space and $A\in \End_0 V$ satisfies the
condition that $\det(A+C)=0$ for any
$C\in \End_0 V$ with $\det (C)=0$, then $A=0$.
\end{proof}

Proposition \ref{prop:lll} allows us to construct the vector bundle
 $$
 \SE \too X
 $$
whose fiber over $x\in X$ is $\SE_x = H^0(\End_0 E \otimes K_X(-x))$.
This is a
subbundle of the trivial vector bundle
 $$
 H^0(\End_0 E\otimes K_X) \otimes_{\mathbb C} {\mathcal O}_X \longrightarrow X\, ,
 $$
and there is an exact sequence
 \begin{equation}\label{eqn:28aug}
 0 \too \SE \too H^0(\End_0 E \otimes K_X) \otimes {\mathcal O}_X
 \stackrel{\pi}{\longrightarrow} \End_0 E\otimes K_X\too 0 \, .
 \end{equation}
(It is exact on the right by Remark \ref{rem-v}.)
So we recover the vector bundle
 $$
 \End_0 E\otimes K_X\too X\, .
 $$

\medskip

Let $x\in X$. We consider the nilpotent cone spaces
 $$
 {\mathcal N}_{E,x} =\{ A \in \End E\otimes K_X|_{x} \, \, | \, \, A^{r}=0\}
\subset \End_0 E\otimes K_X|_{x}\, .
 $$
Consider the map
 $$
 h_{k}: H^0(\End_0 E\otimes K_X) \too W_{k}
 $$
for $2\leq k\leq r$, given as composition of the Hitchin map on $H^0(\End_0  E\otimes K_X)=T^*_EM \subset T^*M$,
followed by the projection $W\longrightarrow W_{k}$.
 The map $h_{k}$ is defined by $(E,\phi)\longmapsto \tr (\wedge^{k}\phi)$. Therefore,
  \begin{equation}\label{eqn:nilp-x}
 \widetilde{\mathcal N}_{E,x} := \bigcap_{k=2}^r h_k^{-1} (H^0(K_X^k(-x))) \subset H^0(\End_0 E \otimes K_X)\,
 \end{equation}
is the preimage of the
nilpotent cone under the surjective map
$H^0(\End_0 E \otimes K_X) \too (\End_0 E \otimes K_X)|_x$.
Take its image to get ${\mathcal N}_{E,x}$.
Letting $x$ vary over $X$, we get the nilpotent cone bundle
 $$
 {\mathcal N}_E \too X
 $$
which sits as a sub-bundle
  $$
  {\mathcal N}_E \subset \End_0 E\otimes K_X .
  $$

\medskip

So the moduli space $M= M(r,\Lambda)$ allows us to recover the nilpotent cone bundle. Let us
see that we can also recover the flag bundle $Fl(E) \too X$ of complete
flags on the fibers of $E$.

\begin{lemma} \label{lem:flag}
Let $V$ be a vector space, and define the \textit{nilpotent cone} of $V$ as:
 $$
 {\mathcal N} = \{ A\in \End V \, | \, A^{r}=0\} = \{ A\in \End_0 V \, | \, A^{r}=0\},
 $$
where $r=\dim V$. Then ${\mathcal N}\subset \End V$ determines the flag variety $Fl(V)$
of complete flags in $V$.
\end{lemma}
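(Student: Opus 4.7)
The plan is to recover $Fl(V)$ as a geometric quotient of an intrinsically defined open subset of $\mathcal{N}$. The regular nilpotent locus
\[
\mathcal{N}^{\mathrm{reg}} := \{A \in \mathcal{N} : A^{r-1} \neq 0\} \subset \mathcal{N}
\]
is manifestly determined by $\mathcal{N} \subset \End V$. Each $A \in \mathcal{N}^{\mathrm{reg}}$ is a single Jordan block of size $r$, so $\dim \ker A^i = i$ for $0 \le i \le r$, and the kernel filtration of $A$ is a complete flag in $V$. This yields a morphism
\[
\phi : \mathcal{N}^{\mathrm{reg}} \longrightarrow Fl(V), \qquad A \longmapsto \bigl(\ker A \subset \ker A^2 \subset \cdots \subset \ker A^{r-1}\bigr),
\]
defined purely from the pair $(\End V, \mathcal{N})$.

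Surjectivity of $\phi$ is immediate: given a complete flag $F_\bullet$, pick an adapted basis $(e_1,\ldots,e_r)$ with $F_i = \langle e_1,\ldots,e_i\rangle$; then the endomorphism $A$ with $A e_{i+1} = e_i$ and $A e_1 = 0$ lies in $\mathcal{N}^{\mathrm{reg}}$ and satisfies $\phi(A) = F_\bullet$.

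To upgrade this set-theoretic surjection to an identification of varieties, I will identify $\phi$ with a $\GL(V)$-equivariant fibration. A regular nilpotent admits a \emph{unique} invariant complete flag, namely its kernel filtration, so $\phi^{-1}(F_\bullet) = \mathcal{N}^{\mathrm{reg}} \cap \mathfrak{n}(F_\bullet)$, where $\mathfrak{n}(F_\bullet) := \{A \in \End V : AF_i \subset F_{i-1}\}$; in adapted coordinates this is the open subset of strictly upper triangular matrices with nonzero superdiagonal, an irreducible smooth open subset of $\mathfrak{n}(F_\bullet)$ of dimension $r(r-1)/2$. Equivalently, $\mathcal{N}^{\mathrm{reg}} \cong \GL(V)/Z$ for $Z$ the centralizer of a regular nilpotent, $Fl(V) \cong \GL(V)/P$ for a Borel $P \supset Z$, and $\phi$ is the canonical smooth fibration with fiber $P/Z$. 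Hence $Fl(V) \cong \mathcal{N}^{\mathrm{reg}}/\!\sim$---where $A \sim A'$ iff $\ker A^i = \ker(A')^i$ for all $i$---is a geometric quotient, and since the whole construction depends only on $\mathcal{N} \subset \End V$, the flag variety is recovered.

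The main subtlety is ensuring that $\phi$ gives a geometric (not merely set-theoretic) quotient, so that $Fl(V)$ acquires the correct scheme structure; this is handled cleanly by the $\GL(V)$-homogeneous description above.
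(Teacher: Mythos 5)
Your construction of $Fl(V)$ as the quotient of the regular nilpotent locus by the kernel-flag equivalence is geometrically correct (the identification $\mathcal{N}^{\mathrm{reg}}\cong \GL(V)/Z$, the containment $Z\subset B$, and the resulting fibration over $\GL(V)/B$ are all fine), but it misses the actual content of the lemma. The point of ``$\mathcal{N}\subset\End V$ determines $Fl(V)$'' is that the flag variety must be recovered from the pair (ambient \emph{vector space}, subvariety $\mathcal{N}$) alone --- functorially with respect to \emph{linear} isomorphisms carrying one cone to another. This is how the lemma is used in Theorem \ref{thm:autom}: at that stage one only has a vector bundle isomorphism $\End_0 E\otimes K_X\to\End_0 E'\otimes K_X$ preserving the nilpotent cone sub-bundles, with no algebra structure available (the Lie algebra bundle isomorphism is only obtained \emph{afterwards}, from the flag bundles). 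Your map $\phi$ is built from $A\mapsto A^{r-1}$ and $A\mapsto\ker A^i$, which use the multiplication in $\End V$ and the action on $V$; a linear isomorphism preserving the cones need not commute with either. So the closing assertion that ``the whole construction depends only on $\mathcal{N}\subset\End V$'' is exactly the claim that needs proof, and as written it is false of your construction.

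Part of this is repairable: $\mathcal{N}^{\mathrm{reg}}$ is the smooth locus of $\mathcal{N}$, hence intrinsic. But characterizing the fibers of $\phi$ (your equivalence relation) without reference to kernels forces you to describe them as the intersections of $\mathcal{N}^{\mathrm{reg}}$ with the maximal linear subspaces contained in $\mathcal{N}$ --- and the statement that these subspaces have dimension $\tfrac12(r^2-r)$ and are in bijection with complete flags is precisely Gerstenhaber's theorem, which is the single ingredient of the paper's proof. The paper simply takes $Fl(V)$ to be the variety parametrizing maximal-dimensional linear subspaces of $\mathcal{N}$, a manifestly linear-algebraic invariant of the pair, and cites Gerstenhaber for the bijection with flags. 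In short: your route produces the right variety but not the right invariance property; either add the intrinsic characterization of the equivalence classes (via Gerstenhaber) or switch to the linear-subspace description outright.
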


\begin{proof}
This is a consequence of a theorem of Gerstenhaber \cite{Gerst}. Each complete flag
in $V$ determines a linear subspace $L\subset {\mathcal N} \subset \End V$ consisting
of nilpotent matrices respecting the flag. The dimension is
 $$
\dim L= \frac12 (r^2-r)= \frac12 \dim {\mathcal N}\, .
 $$
Conversely, any linear subspace $L\subset {\mathcal N}$ of
dimension $\frac12 (r^2-r)$ is determined by a unique flag in this way. So the flag
variety $Fl(V)$ is the space parametrizing these linear subspaces.
\end{proof}

Now we are ready to prove the main result of the paper.

\begin{theorem} \label{thm:autom}
 Let $M=M (r,\Lambda)$ be the moduli space of stable vector bundles. Let
$\Phi:M \too M$ be an automorphism.
Then $\Phi$ is an automorphism of the type
described in the introduction.
\end{theorem}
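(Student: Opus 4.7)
The plan is to combine the Torelli argument (Theorem \ref{thm:torelli}) with the intrinsic reconstruction of the nilpotent cone bundle to pin down $\Phi$ on a generic bundle.

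First, apply Theorem \ref{thm:torelli} to $\Phi$ to extract an automorphism $\sigma \in \mathrm{Aut}(X)$. Composing $\Phi$ with the moduli automorphism $E \mapsto L_0 \otimes (\sigma^{-1})^* E$, for a line bundle $L_0$ satisfying $L_0^r \otimes (\sigma^{-1})^* \Lambda \cong \Lambda$ (which exists because $\Pic^0(X)$ is $r$-divisible), reduces us to the case where the automorphism of $X$ induced by $\Phi$ is the identity. In this situation, the $\CC^*$-equivariant map $f:W\to W$ produced by Lemma \ref{lem:4.1} fixes each $\mathcal{H}_x$ setwise, since an automorphism of $\PP(W_r^*)$ fixing the non-degenerate curve $X$ pointwise must be trivial.

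Next, the construction of the nilpotent cone bundle $\mathcal{N}_E \subset \End_0 E \otimes K_X$ via Proposition \ref{prop:lll} and formula \eqref{eqn:nilp-x} is canonical in $M$; hence $\Phi$ induces, for each generic $E$, an isomorphism $\mathcal{N}_E \cong \mathcal{N}_{\Phi(E)}$ of subbundles over $X$. Applying Lemma \ref{lem:flag} fiberwise upgrades this to an isomorphism of complete flag bundles $Fl(E) \cong Fl(\Phi(E))$ over $X$. Any such isomorphism is induced either by an isomorphism of projective bundles $\PP(E) \cong \PP(\Phi(E))$, giving $\Phi(E) \cong L \otimes E$, or by $\PP(E) \cong \PP(\Phi(E)^*)$, giving $\Phi(E) \cong L \otimes E^*$ (corresponding to the two components of the automorphism group of a type $A_{r-1}$ flag variety). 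Taking determinants yields $L^r \cong \SO_X$ in the first case and $L^r \cong \Lambda^2$ in the second, so the dualization alternative is possible only when $r \mid 2d$.

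The main obstacle is to pass from this pointwise description --- where $L$ and the twist/dual alternative depend a priori on $E$ --- to a global one. Since the admissible line bundles form a coset of the finite group $\mathrm{Jac}(X)[r]$, and $L$ varies algebraically with $E$, the irreducibility of $M$ forces both the alternative and the coset element to be constant; this identifies $\Phi$, first on the generic locus and then by continuity on all of $M$, with one of the automorphisms listed in the introduction.
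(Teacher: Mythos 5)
Your overall architecture matches the paper's: reduce to $\sigma=\mathrm{Id}$ via the Torelli theorem, recover the nilpotent cone bundle, pass to the flag bundle and then to $\End_0 E\cong \End_0 E'$, conclude $E'\cong E\otimes L$ or $E'\cong E^\vee\otimes L$, and globalize by irreducibility of $M$. However, there is a genuine gap at the central step. You only control the automorphism $f$ of $W$ on the top graded piece $W_r$: the dual-variety argument shows (after the reduction $\sigma=\mathrm{Id}$) that $f|_{W_r}$ is a scalar, hence preserves each $\mathcal{H}_x$. But a $\CC^*$-equivariant automorphism of $W=\bigoplus_{k=2}^r W_k$ need not be linear or respect the grading: writing $f=(f_2,\dots,f_r)$, each $f_k$ is only weighted-homogeneous of degree $k$ (with $s_j$ of weight $j$), so for instance $f_4$ may contain a term quadratic in $s_2$ in addition to a linear term in $s_4$. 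Consequently, knowing $f|_{W_r}$ is a scalar does not give $h_r\circ F=c\,h_r$ on all of $T^*_EM$ (the composite picks up contributions from $h_2,\dots,h_{r-1}$ through the cross terms), so even the application of Proposition \ref{prop:lll} to get $\SE\cong\SE'$ is not justified. More seriously, the nilpotent-cone subspaces \eqref{eqn:nilp-x} are cut out by \emph{all} the conditions $h_k(\psi)\in H^0(K_X^k(-x))$ for $k=2,\dots,r$, and the determinant alone only detects the degeneracy locus $\{\det A=0\}$, not the nilpotent cone; so "canonicity" of the construction does not rescue you --- you must show that $F$ preserves each $h_k^{-1}(H^0(K_X^k(-x)))$, which requires controlling $f$ on every $W_k$ and killing the cross terms.

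This is precisely the content of the longest part of the paper's proof, which you have omitted: one shows that $f(s)=\lambda_s\, s$ for generic $s\in W$, hence $f$ is a global scalar and, after rescaling, $f=\mathrm{Id}$. The argument goes through the fibers of the Hitchin map: $F$ extends to an isomorphism of the compactified fibers $H^{-1}(s)\to H^{-1}(f(s))$, which are Prym varieties of the spectral covers $X_s\to X$; the decomposition of polarized Hodge structures $H^1(X_s)\cong H^1(X)\oplus H^1(P_s)$ and the classical Torelli theorem produce an isomorphism $\tilde f:X_s\to X_{f(s)}$ over $X$ inside $\VV(K_X)$; and an explicit computation with $H^0(X_s,\pi^*K_X)\cong H^0(\SO_X)\oplus H^0(K_X)$, using the tracelessness condition $s_1=0$, forces $\tilde f$ to be a fiberwise dilation by a constant. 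Without this step (or a substitute establishing $h\circ F=h$ on all of $W$), the passage from the Torelli reduction to the nilpotent cone bundle does not go through.
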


\begin{proof}
 As in the proof of Theorem \ref{thm:torelli}, the automorphism $\Phi$
yields an isomorphism $\sigma:X\too X$.
 Composing with an automorphism given by $\sigma^{-1}$,
we may assume that $\sigma={\rm Id}$.
 Take a \emph{general} bundle $E$,
 and let $E'$ be its image by $\Phi$. Then we have a diagram
 $$
  \begin{array}{ccc}
  T^*_EM & \stackrel{F=d\Phi}{\too} & T^*_{E'}M \\
  h \Big\downarrow \  & & h \Big\downarrow \ \\
  W & \stackrel{f}{\too} & W
  \end{array}
$$
where $f$ is an automorphism which commutes with
the ${\mathbb C}^*$-action.

\medskip

We will show now that $f={\rm Id}$.

First note that $\Pic M=\ZZ$ (\cite{Drezet-Narasimhan}). Therefore
$\Pic T^* M =\ZZ$. So the generic fiber of the Hitchin map has Picard group
$\ZZ$. Let $s\in W$ be a generic point, and let $s'=f(s)\in W$.
The isomorphism $d\Phi$ induces an isomorphism $$F:h^{-1}(s)\longrightarrow
h^{-1}(s')\, .$$ Since the compactification
$H^{-1}(s)$ of $h^{-1}(s)$ is an abelian variety, and $\codim (H^{-1}(s)- h^{-1}(s))\geq 2$,
we conclude that $F$ extends to an isomorphism $$F: H^{-1}(s) \longrightarrow
H^{-1}(s')$$ of polarized abelian varieties.

Let $X_s$ be the spectral curve (\ref{eqn:spectral-curve})
associated to $s\in W$. Recall that $P_s:=H^{-1}(s)$ is the Prym variety for the covering $X_s \longrightarrow X$.
Therefore there is an isomorphism of rational Hodge structures $H^1(X_s) \cong H^1(X) \oplus H^1(P_s)$.
Moreover, the natural polarization of $H^1(X_s)$ is of the form $a \, \Theta_X + b \, \Theta_s$,
where $a,b\in \QQ_{>0}$, $\Theta_X$ is the natural polarisation of $H^1(X)$, and $\Theta_s$ is
the unique polarisation of $H^1(P_s)$. We observe that this construction can be done over families. Therefore, $a$ and
$b$ are constants. Since $(H^1(P_s), \Theta_s) \cong (H^1(P_{s'}), \Theta_{s'})$, we conclude
that $H^1(X_s)\cong H^1(X_{s'})$ as polarized Hodge structures. By the usual Torelli
theorem, $\tilde{f}:X_s\stackrel{\cong}{\too} X_{s'}$. The isomorphism $H^1(X_s)\cong H^1(X_{s'})$ preserves the factor $H^1(X)$.
Therefore $\tilde{f}:X_s\to X_{s'}$
commutes with the natural projections $X_s \longrightarrow X$ and $X_{s'}\longrightarrow X$.

Recall that $X_s,X_{s'}\subset {\mathbb V}(K_X)$. The isomorphism
sends each point $p\in X_s$ to a point $\tilde{f}(p)\in X_{s'}$.
Both points are in the same fiber of ${\mathbb V}(K_X)$,
so, if $p$ is not in the zero section,
the quotient $\tilde{f}(p)/p$ is a well-defined complex number.
If we vary $p$, we obtain a rational function $\lambda(p)$ on $X_s$,
such that $\tilde{f}$ can be written as $\tilde{f}(p)=\lambda(p)p$.
This function has poles at the intersection of $X_s$ with the zero section of
${\mathbb V}(K_X) \to X$. So, it is of the form $\lambda=g/y$,
where $y\in H^0(X_s,\pi^*K_X)$
is the tautological coordinate $dx$ along the fibers
of the projection ${\mathbb V}(K_X)\too X$
and $g$ is a section of $H^0(X_s,\pi^*K_X)$.

Now
\begin{eqnarray*}
H^0(X_s, \pi^* K_X) &=& H^0(X, (\pi_* {\mathcal O}_{X_s}) \otimes K_X)=
H^0(X, \bigoplus_{j=0}^{r-1} K_X^{1-j}) = \\
 &=& H^0(X, {\mathcal O}_X) \oplus H^0(X, K_X) \, .
\end{eqnarray*}
In this decomposition, the tautological section $y$ is the identity
on the first summand. Therefore, we can write $g=g_1 y +g_2$,
where $g_1\in \CC$ and $g_2\in H^0(X,K_X)$.
Therefore, the isomorphism $\tilde{f}$ sends a point $p\in X_s$ with
coordinate $y$ to the point
$$
\lambda(x)y= \left(g_1 + \frac{g_2}{y}\right) y = g_1 y + g_2\, .
$$
Therefore, the effect of $g_1\in \CC$ is to produce a dilation with
constant factor, and the effect of $g_2$ is to produce a translation
along the fiber. Note that $g_2$ is defined as a section on the curve
$X$, so the translation is the same for all points of $X_s$ over the
same point of $X$.
As we are requiring $s_1(x)=0$ for the spectral curves, we must have
$g_2=0$. Therefore, the isomorphism $\tilde{f}$ is just a dilation by a
(nonzero) constant factor.

Summarizing, we have proved that for general $s$,
 $$
 f(s)= \lambda_s \, s \, ,
 $$
for some $\lambda_s \in \CC^*$.
This should then hold for all $s\in W$. Hence $\lambda_s$ is constant, and $f$ is multiplication
by a constant $\lambda$. After scaling by $\lambda^{-1}$, we get that $f={\rm Id}$.

\medskip

Now there is a vector bundle $\SE \too X$, whose fiber
over any $x\in X$ is the subspace $H^0(\End_0 E\otimes K_X(-x))\subset
T_E^*M$. Analogously, there
is a vector bundle $\SE'\too X$ whose fiber over any $x$ is
the subspace $H^0(\End_0 E'\otimes K_X(-x))\subset
T_{E'}^*M$. As $h \circ F=h$, Proposition \ref{prop:lll} implies that
$F: T_E^*M \too T_{E'}^*M$ gives an
isomorphism $\SE \too \SE'$.
Passing to the quotient bundle (\ref{eqn:28aug}), we have an isomorphism of vector bundles
 $$
 \End_0 E\otimes K_X \longrightarrow \End_0 E'\otimes K_X\, .
 $$
The map $F$ preserves the subspaces (\ref{eqn:nilp-x}), again because
$h\circ F=h$. Therefore there is an isomorphism
 $$
 \begin{array}{ccc}
 {\mathcal N}_E & {\too} & {\mathcal N}_{E'} \\
 \Big\downarrow & & \Big\downarrow \\
 X & = & X
 \end{array}
 $$
where ${\mathcal N}_E$ and ${\mathcal N}_{E'}$ are the corresponding nilpotent cone bundles.
By Lemma \ref{lem:flag}, we get an isomorphism
 $$
 \begin{array}{ccc}
 Fl(E) & {\too} & Fl({E'}) \\
 \Big\downarrow & & \Big\downarrow \\
 X & = & X
 \end{array}
 $$
of the corresponding flag variety bundles. Considering
the global vertical fields, we have a Lie algebra bundle isomorphism
 $$
 \begin{array}{ccc}
 \End_0 E & {\too} & \End_0 {E'} \\
 \Big\downarrow & & \Big\downarrow \\
 X & = & X
 \end{array}
 $$
Using Lemma \ref{endiso} below, it follows that $E'\cong E\otimes L$ or $E'\cong
E^\vee \otimes L$, for some line
bundle $L$. In the first case, it should be $L^r\cong \SO_X$.
In the second case, $L^r \cong \Lambda^2$ (and in particular $r|2d$).

As this holds for a generic $E$, it holds for all $E$.
\end{proof}

\begin{lemma}
\label{endiso}
Let $E$ and $E'$ be vector bundles of rank $r$ such
that $\End_0 E$ and $\End_0 E'$ are isomorphic
as Lie algebra bundles. Then there is a line
bundle $L$ such that, either $E'\cong E\otimes L$,
or $E' \cong E^\vee\otimes L$.
\end{lemma}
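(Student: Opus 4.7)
The plan is to combine the classification of Lie algebra automorphisms of $\mathfrak{sl}_r$ with a global descent argument on the curve $X$.

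First, I would establish the pointwise classification. For $r\geq 3$ the automorphism group of the simple Lie algebra $\mathfrak{sl}_r$ fits into a split extension $1 \too \operatorname{PGL}_r \too \operatorname{Aut}(\mathfrak{sl}_r) \too \ZZ/2 \too 1$, the nontrivial outer component being represented by the diagram involution $A\mapsto -A^{t}$. Translated into vector-space language, any Lie algebra isomorphism $\mathfrak{sl}(V)\stackrel{\sim}{\too}\mathfrak{sl}(V')$ between two $r$-dimensional spaces has one of two forms: (a) $A\mapsto \phi A\phi^{-1}$ for a linear isomorphism $\phi\colon V\too V'$, or (b) $A\mapsto -\psi A^\vee \psi^{-1}$ for a linear isomorphism $\psi\colon V^\vee\too V'$; in both cases $\phi$ (resp.\ $\psi$) is unique up to a scalar in $\CC^\ast$. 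For $r=2$ the outer component is absent and $V^\vee\cong V\otimes(\det V)^{-1}$, so the two cases coincide.

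Next I would globalise. The given Lie algebra bundle isomorphism $\End_0 E\stackrel{\sim}{\too}\End_0 E'$ defines, fibre by fibre, an element of $\operatorname{Aut}(\mathfrak{sl}_r)/\operatorname{Inn}(\mathfrak{sl}_r)\cong \ZZ/2$; this assignment is locally constant, and hence constant by connectedness of $X$, so case (a) or case (b) holds uniformly over $X$. In case (a), the pointwise scalar ambiguity in $\phi$ means that the isomorphism descends to an isomorphism $\PP(E)\stackrel{\sim}{\too}\PP(E')$ of $\operatorname{PGL}_r$-bundles. On the smooth projective curve $X$ one has $H^2(X,\SO_X^\ast)=0$, so every $\operatorname{PGL}_r$-bundle lifts to a $\operatorname{GL}_r$-bundle, and any two lifts differ by twisting with an element of $\Pic(X) = H^1(X,\SO_X^\ast)$. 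Applied to the $\operatorname{PGL}_r$-isomorphism just constructed, this produces a line bundle $L$ on $X$ together with an isomorphism $E'\cong E\otimes L$. In case (b), the canonical Lie algebra bundle isomorphism $\End_0 E^\vee\cong \End_0 E$ given fibrewise by $A\mapsto -A^\vee$ converts the situation into case (a) applied to the pair $(E^\vee,E')$, yielding $E'\cong E^\vee\otimes L$ for some line bundle $L$.

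The substantive input is the pointwise classification of $\operatorname{Aut}(\mathfrak{sl}_r)$, which I expect to be the one place requiring actual work; everything else is a routine torsor/cocycle manipulation using the vanishing of $H^2(X,\SO_X^\ast)$ on a smooth projective curve. The classification itself may be established quickly by observing that the defining $r$-dimensional representation of $\mathfrak{sl}_r$ is, up to duality, the unique irreducible representation of minimal dimension, so any automorphism of $\mathfrak{sl}_r$ must either preserve or exchange the isomorphism classes of this representation and its dual, producing precisely cases (a) and (b).
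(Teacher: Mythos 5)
Your proposal is correct and follows essentially the same route as the paper: both arguments reduce the statement to the extension $1\to\operatorname{Inn}(\mathfrak{sl}_r)\to\operatorname{Aut}(\mathfrak{sl}_r)\to\ZZ/2\to 1$, with the outer $\ZZ/2$ accounting for the ambiguity $E$ versus $E^\vee$, and to the central extension $1\to\CC^*\to\GL_r\to\PGL_r\to 1$, whose $H^1(X,\SO_X^*)$-action accounts for the twist by a line bundle $L$. The paper packages this as a two-step analysis of reductions of structure group of a principal $\operatorname{Aut}(\mathfrak{sl}_r)$-bundle, whereas you classify the isomorphism fibrewise and glue using connectedness of $X$, but the mathematical content is the same.
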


\begin{proof}

Giving a vector bundle $\End_0 E$ with
its Lie algebra structure is equivalent to giving
a principal ${\rm Aut}(\mathfrak{sl}_r)$-bundle $P_{{\rm Aut}(\mathfrak{sl}_r)}$ which admits
a reduction of structure group to a principal $\GL_r$-bundle $P_{\GL_r}$ (the
one corresponding to $E$).

We will study this reduction
in two steps:
$$
\GL_r \surj \PGL_r={\rm Inn}(\mathfrak{sl}_r)={\rm Aut}(\mathfrak{sl}_r)^0
 \inj {\rm Aut}(\mathfrak{sl}_r)\, ,
$$
where ${{\rm Aut}(\mathfrak{sl}_r)^0}$ is the connected component of the
identity, which coincides with the inner automorphisms
${\rm Inn}(\mathfrak{sl}_r)$.
There is
a short exact sequence of groups
$$
1 \too {\rm Inn}(\mathfrak{sl}_r) \too {\rm Aut}(\mathfrak{sl}_r) \too
{\rm Out}(\mathfrak{sl}_r)=\ZZ/2\ZZ \too 1\, .
$$
Hence, the space of reductions of structure group of
$P_{{\rm Aut}(\mathfrak{sl}_r)}$ to $\PGL_r$ correspond to
sections of the associated bundle
$P_{{\rm Aut}(\mathfrak{sl}_r)}({\rm Out}(\mathfrak{sl}_r))$.
Since ${\rm Out}(\mathfrak{sl}_r)=\ZZ/2\ZZ$, this
associated bundle is
a 2-to-1 cover of $X$, which is trivial
(since we know that there are reductions), and
therefore there are
two reductions; these two reductions are related
by an outer automorphism. Therefore,
if $\PP\to X$ is the bundle of projective spaces corresponding to
one reduction, the other one
is $\PP^\vee\to X$, the dual projective bundle.

Now consider the short exact sequence of groups
 $$
 1 \too \CC^* \too \GL_r \too \PGL_r \too 1\, .
 $$
The set of isomorphism classes of
reductions of a $\PGL_r$-bundle
to $\GL_r$ is a torsor for the group $H^1(X,\SO^*_X)$.
Let $E$ be a vector bundle corresponding
to a reduction of $\PP\longrightarrow X$, i.e.,
$\PP(E)\cong\PP$. Then the reductions
correspond to vector bundles
of the form $E\otimes L$ for any line bundle $L$.
On the other hand, $E^\vee$ is a reduction of
$\PP^\vee\longrightarrow
 X$. Therefore, if $E$ is the vector bundle
corresponding to a reduction of $P_{{\rm Aut}(\mathfrak{sl}_r)}$, then
all reductions are of the form either $E\otimes L$ or $E^\vee\otimes L$,
where $L$ is a line bundles.
\end{proof}

\begin{remark}
{\rm Let $\overline{M}(r,\Lambda)$ be the moduli space of semistable vector bundles
on $X$ or rank $r$ and determinant $\Lambda$. The automorphism group of
$\overline{M}(r,\Lambda)$ coincides with that of $M(r,\Lambda)$. To see this,
first note
that since $g\, \geq\, 3$, the smooth locus of $\overline{M}(r,\Lambda)$ coincides
with $M(r,\Lambda)$. Hence we have an injective homomorphism
$$
{\rm Aut}(\overline{M}(r,\Lambda))\, \longrightarrow \, {\rm Aut}(M(r,\Lambda)),
$$
obtained by restricting maps. To prove that the above homomorphism
is surjective, we note that for any line bundle $\zeta$ on
$\overline{M}(r,\Lambda)$, the restriction homomorphism
\begin{equation}\label{eqca1}
H^0(\overline{M}(r,\Lambda), \, \zeta) \, \longrightarrow\, H^0(M(r,\Lambda),
\, \zeta\vert_{M(r,\Lambda)})
\end{equation}
is an isomorphism. Also, from the fact that the Picard group of
$M(r,\Lambda)$ is $\mathbb Z$, it follows that the action
of ${\rm Aut}(M(r,\Lambda))$ on ${\rm Pic}(M(r,\Lambda))$ is trivial.
Now, take a very ample line bundle $\zeta$ on $\overline{M}(r,\Lambda)$.
Since the homomorphism in \eqref{eqca1} is an isomorphism, and any
automorphism $T$ of $M(r,\Lambda)$ fixes the restriction of $\zeta$,
we conclude that $T$ acts on $H^0(\overline{M}(r,\Lambda),\, \zeta)$. The
corresponding automorphism of ${\mathbb P}(H^0(M(r,\Lambda),\zeta))$
clearly fixes the image of $\overline{M}(r,\Lambda)$. Therefore, $T$ extends
to an automorphism of $\overline{M}(r,\Lambda)$.}
\end{remark}


\begin{thebibliography}{EMG}

\bibitem[Bh]{Bh}{U. Bhosle, }
\textit{Generalized parabolic bundles and applications--II, }
Proc. Indian Acad. Sci. (Math. Sci.) \textbf{106} (1996), 403--420.

\bibitem[BGM]{us}{I. Biswas, T. G\'omez and V. Mu\~noz, }
\textit{Automorphisms of the moduli space of symplectic bundles, }
arXiv:1009.1975.

\bibitem[DN]{Drezet-Narasimhan}{J.-M. Drezet and M.S. Narasimhan,}
\textit{The Picard group of moduli varieties of semistable bundles over algebraic curves,}
Invent. Math. \textbf{97} (1989), 53--94.

\bibitem[Fa]{Fa}{G. Faltings, }
\textit{Stable $G$-bundles and projective connections,}
Jour. Algebraic Geom. \textbf{2} (1993), 507--568.

\bibitem[Ge]{Gerst}{M. Gerstenhaber,}
\textit{On nilalgebras and linear varieties of nilpotent matrices. I,}
Amer. J. Math. \textbf{80} (1958), 614--622.

\bibitem[Hi]{Hi}{N. Hitchin, }
\textit{Stable bundles and integrable systems,}
Duke Math. Jour. \textbf{54} (1987), 91--114.

\bibitem[H]{Hoff} N. Hoffmann, ``Moduli stacks of vector bundles on
curves and the King-Schofield rationality proof'',
F. Bogomolov and Y. Tschinkel (Eds.): Cohomological and Geometric Approaches to Rationality Problems. New Perspectives.
Progress in Mathematics 282, pp. 133-148, Birkh\"auser (2010)

\bibitem[HR]{HR}{J.-M. Hwang and S. Ramanan, }
\textit{Hecke curves and Hitchin discriminant, }
Ann. Sci. \'Ecole Norm. Sup. \textbf{37} (2004), 801--817.

\bibitem[KP]{KP}{A. Kouvidakis and T. Pantev, }
\textit{The automorphism group of the moduli space
of semistable vector bundles, }
Math. Ann. 302 (1995), 225--268.

\bibitem[MN]{MN}{D. Mumford and P.E. Newstead, }
\textit{Periods of a moduli space of bundles on curves, }
Amer. J. Math. \textbf{90} (1968), 1200--1208.

\bibitem[NR]{NR}{M.S. Narasimhan and S. Ramanan, }
\textit{Deformations of the moduli space of vector bundles
over an algebraic curve, }
Ann. Math. \textbf{101} (1975), 391--417.

\bibitem[NR2]{NR2}{M.S. Narasimhan and S. Ramanan, }
\textit{Generalized Prym varieties as fixed points, }
Jour. Indian Math. Soc. \textbf{39} (1975), 1--19.

\end{thebibliography}
\end{document}